\newcommand{\R}{{\mathbb R}}
\def \d {\mathrm{d}}
\def \Cp {\mathrm{Cap}}
\def \ep {\varepsilon}
\def \de {\partial}
\def \e {\varepsilon}
\newtheorem{thm}{Theorem}[section]
\newtheorem{cor}[thm]{Corollary}
\newtheorem{lem}[thm]{Lemma}
\newtheorem{prop}[thm]{Proposition}
\newtheorem{defn}[thm]{Definition}
\numberwithin{equation}{section}
\title[Symmetry of singular solutions]{Symmetry and monotonicity of singular solutions to 
$p$-Laplacian systems involving a first order term}
\author[S.\,Biagi]{Stefano Biagi}
\author[F.\,Esposito]{Francesco Esposito}
\author[L.\,Montoro]{Luigi Montoro}
\author[E.\,Vecchi]{Eugenio Vecchi}
\address[S.\,Biagi]{Politecnico di Milano - Dipartimento di Matematica
\newline\indent
Via Bonardi 9, 20133 Milano, Italy}
\email{stefano.biagi@polimi.it}
\address[F.\,Esposito]{Dipartimento di Matematica e Informatica, Universit\`a della Calabria
\newline\indent
Ponte Pietro Bucci 31B, 87036 Arcavacata di Rende, Cosenza, Italy}
\email{francesco.esposito@unical.it}
\address[L.\,Montoro]{Dipartimento di Matematica e Informatica, Universit\`a della Calabria
\newline\indent
Ponte Pietro Bucci 31B, 87036 Arcavacata di Rende, Cosenza, Italy}
\email{montoro@mat.unical.it}
\address[E.\,Vecchi]{Dipartimento di Matematica, Universit\`a di Bologna
\newline\indent
Piazza di Porta San Donato 5, 40126 Bologna, Italy}
\email{eugenio.vecchi2@unibo.it}
\subjclass[2010]{35B06, 35J75, 35J62, 35B51}
\keywords{Singular solutions, $p$-Laplacian systems, moving plane method}
\thanks{The authors are members of INdAM. 
F. Esposito and L. Montoro are partially supported by PRIN project 2017JPCAPN (Italy): 
{\em Qualitative and quantitative aspects of nonlinear PDEs.} L. Montoro is partially 
supported by  Agencia Estatal de Investigación (Spain): {\em project PDI2019-110712GB-100}.
}
\begin{document}
\begin{abstract}
We consider positive singular solutions (i.e.~with  a non-removable singularity) of a system of PDEs driven by $p$-La\-pla\-cian 
operators and with the 
additional presence of a nonlinear first order term. By a careful use of a rather new version of the moving 
plane method, we prove the symmetry of the solutions. The result is  already new in the scalar  case. 
\end{abstract}

\maketitle

\medskip

\section{Introduction}\label{intro}
The study of symmetry properties of solutions of PDEs is a long--standing topic which dates back to the 
foundational works of Alexandrov \cite{A}, Serrin \cite{serrin}, Gidas, Ni and Nirenberg \cite{GNN} and 
Berestycki and Nirenberg \cite{BN}. The common point among such papers is the use of the celebrated {\it 
moving plane method} in the case of semilinear elliptic equations. This powerful technique relies heavily 
on the validity of (weak and strong) maximum or comparison, principles. The relevance of the ideas which 
are at the core of the a\-fo\-re\-men\-tio\-ned papers is clearly witnessed by the huge number of 
contributions which are now available in the literature, whose aim has been to extend the results above mentioned to several different cases in the local framework: quasilinear equations 
(see, e.g., \cite{Da2, DP, DS1, DamSciunzi}) and 
cooperative elliptic systems  (see, e.g., \cite{busca, CMM, Dan, DeF, defig2,Troy}).\\
A slightly different line of research concerns the study of symmetry properties of {\em singular} solutions. 
In this case, we refer, e.g., to \cite{CLN2, Ter} for the case of point-singularity.
More re\-cen\-tly, Sciunzi \cite{Dino} developed a new method which allows to study symmetry properties of 
solutions which are singular on sets of {\it small capacity}. \\
The aim of this paper is to  focus on systems of PDEs 
driven by the $p$-Laplacian with the additional presence of a nonlinear gradient term. To be more precise,
 we will consider solutions $u_i \in C^{1,\alpha}(\overline{\Omega}\setminus \Gamma)$ ($i=1,\ldots, m$) to the 
 following system of PDEs
 \begin{equation}\tag{$\mathcal S$}\label{eq:System}
 \left\{\begin{array}{rl}
 -\Delta_{p_i}u_i + a_i(u_i)|\nabla u_i|^{q_i} = 
 f_i(u_1, \ldots, u_m) & \textrm{in } \Omega \setminus \Gamma\\
 u_i >0 & \textrm{in } \Omega\setminus \Gamma\\
 u_i = 0 & \textrm{on } \partial \Omega.
\end{array}\right.
\end{equation}
Here, $\Omega \subseteq \mathbb{R}^N$ is a smooth and bounded domain and for suitable smooth functions 
\begin{equation*}
-\Delta_{p}v := \mathrm{div}\left(|\nabla v|^{p-2}\nabla v\right)
\end{equation*}
\noindent denotes the $p$-Laplacian. Generally, solutions to equations involving the 
$p$-Laplace operator are  of class $C^{1,\alpha}$. This assumption 
is natural according to classical 
regularity results \cite{DB,Li,T}. We  point out that,  in this paper, we deal with  singular solutions  that are $C^{1,\alpha}$ far from the critical set $\Gamma$. We consider  solutions    with a non-removable singularity:  we mean that solutions possibly do not admit a smooth extension in $\Omega$, i.e. it is not possible to find $\tilde u_i\in W^{1,p}_0(\Omega)$ such that $u_i\equiv \tilde u_i$ in $\Omega\setminus \Gamma$. Indeed,  without any a priori assumption, the gradient of the solutions possibly blows up near the 
critical set and hence each equations of \eqref{eq:System} may exhibit both a degenerate  and a 
singular 
nature at the same time.
\\ 
 Before stating our main result,
 we first introduce
 the main \emph{structural as\-sump\-tions}
 we require on problem \eqref{eq:System}; these assumptions
 will be tacitly understood in the sequel.
 \medskip
 
 \noindent\textbf{Assumptions.} Throughout what follows, we assume that
 \begin{itemize}
  \item[$({h}_\Omega):$] $\Omega$ is a bounded domain in $\R^N$ (with $N\geq 2$), which is 
  \emph{convex} in the $x_1$-di\-rec\-tion and symmetric with respect to the hyperplane
  $$\Pi_0 := \{x\in\R^N:\,x_1 = 0\};$$
  \item[$({h}_\Gamma):$] $\Gamma\subseteq\Omega\cap\Pi_0$ is a compact set 
  satisfying
  $\mathrm{Cap}_{p}(\Gamma) = 0$, where
  $$p := \max_{1\leq k\leq m}p_k;$$
  \item[$({h}_{p,q}):$] for every $i = 1,\ldots,m$, we have
  \begin{equation}\label{eq:arnoldstrong}
  \begin{split}
  \frac{2N+2}{N+2} < p_i\leq N\qquad&\text{and}
  \qquad \max\{p_i-1,1\}\leq q_i< p_i\\
  &\,\,\text{or}
  \\
2\leq  p_i\leq N\qquad&\text{and}
  \qquad q_i =p_i.
  \end{split}
  \end{equation}
  \item[$({h}_{a}):$] 
  the functions $a_i(\cdot)$ are \emph{locally Lipschitz-continuous}
  on $\mathcal{I} := [0,+\infty)$, that is,
  for e\-ve\-ry $M > 0$ there exists a constant 
  $L = L_{M} > 0$ such that
  $$|a_i(t)-a_i(s)|\leq L|t-s|\qquad \text{for every $t,s\in[0,M]$};$$
  \item[$(h_f):$] 
  the functions $f_i(\cdot)$ are \emph{of class $C^1$}
  on $\mathcal{I}^m$; moreover, we assume that
  \begin{itemize}
   \item[(a)] $f_i(t_1,\ldots,t_m) > 0$  for every $t_1,\ldots,t_m > 0$;
   \vspace*{0.05cm}
   
   \item[(b)] $\de_{t_k}f_i(\cdot)\geq 0$ for every $1\leq i,k\leq m$ with $i\neq k$.
  \end{itemize}
 \end{itemize}
We remark that, without loss of generality, we are  considering the $x_1$-direction  being the operator  invariant with respect to translations and rotations.

 Then, we properly define what we mean by a \emph{weak solution}
 to problem \eqref{eq:System}.
 \begin{defn} \label{def:weaksol}
 We say that a vector-valued function
 $$\mathbf{u} = (u_1,\ldots,u_m)\in C^{1,\alpha}(\overline{\Omega}\setminus\Gamma;\R^m)$$
 is a \emph{weak solution} to \eqref{eq:System} 
 if it satisfies the properties listed below:
  \begin{enumerate}
   \item for every $i = 1,\ldots,m$ and every $\varphi\in C_c^1(\Omega\setminus\Gamma)$, one has
   \begin{equation} \label{eq:weaksoldef}
    \int_{\Omega}|\nabla u_i|^{p_i-2}\langle \nabla u_i,\nabla\varphi\rangle\,d x
    + \int_{\Omega}a_i(u_i)|\nabla u_i|^{q_i}\varphi\,d x
    = \int_{\Omega}f_i(u_1,\ldots,u_m)\varphi\,d x;
   \end{equation}
   \item 
   $u_i > 0$ pointwise in $\Omega\setminus\Gamma$ \emph{(}for every $i = 1,\ldots,m$\emph{)};
   \item $u_i = 0$ on $\de \Omega$ 
   \emph{(}for every $i = 1,\ldots,m$\emph{)}.
  \end{enumerate}
\end{defn}
Our main results is the following
\begin{thm}\label{thm:symmetry}
 Let assumptions $(h_\Omega)$-to-$(h_f)$ be in force, and let
 $\mathbf{u} \in C^{1,\alpha}(\overline{\Omega}\setminus \Gamma; \mathbb{R}^m)$ 
 be a weak solution to problem \eqref{eq:System}. Then, the following facts hold:
 \begin{itemize}
 	\item[(i)] $\mathbf{u}$ is symmetric with respect to the hyperplane $\Pi_0$, namely
 	$$\mathbf{u}(x_1,x_2, \dots, x_N) = \mathbf{u}(-x_1,x_2, \dots, x_N) \quad \text{in } \Omega.$$
 	\item[(ii)] $\mathbf{u}$ is non-decreasing in the $x_1$-direction in the set $\Omega_0 =\{x_1<0\}$ and moreover 
\begin{equation}\label{eq:ehvabe}
\partial_{x_1} u_i > 0 \quad \text{in } \Omega_0,
\end{equation}
 	for every $i \in \{1,\dots, m\}$.
 \end{itemize}
\end{thm}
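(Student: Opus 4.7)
The strategy is a moving plane argument in the $x_1$-direction, adapted to handle simultaneously the singularities on $\Gamma$, the coupling of the system, and the first-order term. For $\lambda$ in the range $(-a,0]$ with $-a = \inf_\Omega x_1$, set $\Omega_\lambda = \{x\in\Omega: x_1 < \lambda\}$, let $R_\lambda(x) = (2\lambda - x_1, x_2,\ldots,x_N)$ be the reflection across $\Pi_\lambda = \{x_1 = \lambda\}$, and define $u_i^\lambda(x) = u_i(R_\lambda(x))$. Since $\Gamma \subset \Pi_0$, for $\lambda \in (-a,0)$ the reflected singular set $R_\lambda(\Gamma) \subset \Omega_\lambda$ lies strictly inside $\Omega_\lambda$, while $u_i$ itself is smooth there. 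I would introduce
$$\Lambda := \{\lambda \in (-a,0] : u_i \leq u_i^\tau \text{ in } \Omega_\tau\setminus R_\tau(\Gamma), \ \forall\, i,\ \forall\, \tau\in(-a,\lambda]\},$$
prove that $\Lambda$ is non-empty (starting step) and then that $\sup\Lambda = 0$ (sliding step). The symmetry of $\mathbf{u}$ in $x_1$ follows by repeating the argument from the opposite direction, while the strict monotonicity \eqref{eq:ehvabe} in $\Omega_0$ is obtained from the strong comparison principle applied componentwise on each connected component of $\Omega_\lambda\setminus R_\lambda(\Gamma)$.

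The starting step and the sliding step both hinge on a \emph{weak comparison principle} for $u_i - u_i^\lambda$ in $\Omega_\lambda$, obtained by testing each equation in \eqref{eq:weaksoldef} against
$$\varphi_i = (u_i - u_i^\lambda)^+ \, \eta_\varepsilon^{p_i},$$
where $\eta_\varepsilon$ is a cutoff that vanishes in a neighborhood of $\Gamma \cup R_\lambda(\Gamma)$ and equals $1$ away from it. Subtracting the two weak formulations (for $u_i$ and $u_i^\lambda$) and using the standard algebraic inequalities for the $p$-Laplacian, the principal part produces a coercive quantity in $|\nabla(u_i - u_i^\lambda)^+|$ on $\{u_i > u_i^\lambda\}$. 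The right-hand side splits into three contributions: the difference of the forcing terms $f_i(\mathbf{u}) - f_i(\mathbf{u}^\lambda)$, the difference of the first-order terms $a_i(u_i)|\nabla u_i|^{q_i} - a_i(u_i^\lambda)|\nabla u_i^\lambda|^{q_i}$, and the cutoff remainder involving $|\nabla \eta_\varepsilon|$. The cooperative assumption $(h_f)(b)$ lets me absorb the cross terms coming from $f_i$ by summing over $i$, using that on the set where $u_i > u_i^\lambda$ the coupling terms have a favorable sign; the local Lipschitz continuity of $a_i$ combined with the constraint $\max\{p_i-1,1\}\leq q_i \leq p_i$ in $(h_{p,q})$ is precisely what allows the first-order remainder to be estimated by a term absorbable into the coercive left-hand side plus a term of lower order; finally, the vanishing of $\mathrm{Cap}_p(\Gamma)$ lets one choose $\eta_\varepsilon$ with $\int |\nabla \eta_\varepsilon|^p \to 0$, killing the cutoff contribution in the limit. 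A Poincaré-type inequality on the (small-measure) set $\{u_i > u_i^\lambda\} \cap \Omega_\lambda$ then forces $(u_i - u_i^\lambda)^+ \equiv 0$ in $\Omega_\lambda\setminus R_\lambda(\Gamma)$, for $\lambda$ either close to $-a$ or close to $\sup\Lambda$.

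Once the starting step is settled, I define $\lambda_0 = \sup\Lambda$ and argue by contradiction assuming $\lambda_0 < 0$. By continuity, $u_i \leq u_i^{\lambda_0}$ on $\Omega_{\lambda_0}\setminus R_{\lambda_0}(\Gamma)$. A strong comparison principle (applied to each linearized equation on connected components of $\Omega_{\lambda_0}\setminus R_{\lambda_0}(\Gamma)$ where the critical set of $u_i^{\lambda_0}$ is discarded) upgrades this to strict inequality $u_i < u_i^{\lambda_0}$, so that for any $\varepsilon > 0$ small the measure of $\{u_i > u_i^{\lambda_0 + \varepsilon}\} \cap \Omega_{\lambda_0+\varepsilon}$ is controlled. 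Re-running the weak comparison above in the cap $\Omega_{\lambda_0+\varepsilon}$, with $\varepsilon$ small enough to trigger the smallness used to absorb the right-hand side, yields $u_i \leq u_i^{\lambda_0+\varepsilon}$ in $\Omega_{\lambda_0+\varepsilon}\setminus R_{\lambda_0+\varepsilon}(\Gamma)$, contradicting the definition of $\lambda_0$. Hence $\lambda_0 = 0$, and applying the same argument in the reversed direction yields the symmetry in (i); monotonicity and \eqref{eq:ehvabe} in (ii) follow from the strong comparison for $\partial_{x_1} u_i$, which solves a linearized equation on $\Omega_0$ minus the critical set.

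The main obstacle I expect is the simultaneous presence of the singular set $\Gamma$ and of the first-order term $a_i(u_i)|\nabla u_i|^{q_i}$. The gradient of $u_i^\lambda$ may blow up on $R_\lambda(\Gamma)$, so the capacity cutoff $\eta_\varepsilon$ must be tuned carefully: the exponent $p$ in $\mathrm{Cap}_p(\Gamma) = 0$ is chosen as $\max_k p_k$ precisely to make $\int |\nabla u_i^\lambda|^{p_i-1}|\nabla \eta_\varepsilon|$ and $\int |\nabla u_i^\lambda|^{q_i}|\eta_\varepsilon|^{p_i}$ controllable via Hölder with the right exponent balance — this is where the restriction $(h_{p,q})$, especially the lower bound $p_i > (2N+2)/(N+2)$ and the relation between $q_i$ and $p_i$, becomes indispensable. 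Making this two-parameter (in $\varepsilon$ and in $\lambda$) limiting procedure rigorous, while keeping the estimates uniform in the index $i$ so that the cooperative structure can be exploited by summation, is the technical heart of the proof.
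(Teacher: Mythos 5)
Your overall skeleton --- define $\Lambda$, test with $(u_i-u_{i,\lambda})^+\psi_\varepsilon^{p_i}$, absorb cross terms via cooperativity, use capacity cutoffs, close with a Poincar\'e-type estimate on a small-measure set, then apply the strong comparison principle --- coincides with the paper's strategy. But there is a genuine gap at precisely the step you compress into ``a Poincar\'e-type inequality then forces $(u_i-u_i^\lambda)^+\equiv 0$.'' When $p_i<2$, the coercive quantity produced by the principal part is
$\int (|\nabla u_i|+|\nabla u_{i,\lambda}|)^{p_i-2}|\nabla w_{i,\lambda}^+|^2$
with a weight that blows up where both gradients vanish. To convert this into an $L^2$ bound on $w_{i,\lambda}^+$ one would normally invoke Trudinger's weighted Sobolev inequality, which requires the weight $(|\nabla u_i|+|\nabla u_{i,\lambda}|)^{p_i-2}$ to lie in a good $L^t$ class; since $\nabla u_{i,\lambda}$ can blow up near $R_\lambda(\Gamma)$ and may also vanish elsewhere, this integrability is not available without a priori growth estimates for the gradient near $\Gamma$ (this is precisely the route taken in the predecessor [EMS], which imposes growth conditions on $f$ to import the estimates of [PQS]). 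The paper's main technical contribution --- and what is missing from your proposal --- consists of Lemma \ref{leaiuto} (an a priori bound $\int_{\Omega_\lambda}(|\nabla u_i|+|\nabla u_{i,\lambda}|)^{p_i-2}|\nabla w_{i,\lambda}^+|^2\,dx\leq \mathbf{c}_i$ obtained by splitting $\Omega_\lambda$ into the regions $\{|\nabla u_{i,\lambda}|<2|\nabla u_i|\}$ and its complement, where either $\nabla u_{i,\lambda}$ is controlled by the bounded $\nabla u_i$, or $\nabla u_{i,\lambda}$ is comparable to $\nabla w_{i,\lambda}$) and Lemma \ref{lem:SobolevSplitting} (which, for $1<p_i<2$, bounds $\int|\nabla w_{i,\lambda}^+|^{p_i}$ by $(\int(|\nabla u_i|+|\nabla u_{i,\lambda}|)^{p_i-2}|\nabla w_{i,\lambda}^+|^2)^{p_i/2}$ using the same region-splitting plus Lemma \ref{leaiuto}). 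Together these replace the weighted Poincar\'e with an unweighted Sobolev + H\"older chain and eliminate any a priori growth hypothesis. Your sketch does not contain a substitute for these lemmas, so in the singular range the argument as written cannot be closed.

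Two secondary points worth flagging. First, you attribute the lower bound $p_i>(2N+2)/(N+2)$ to exponent balancing in H\"older estimates against the cutoff; in fact $p_i^*\geq 2$ only needs $p_i\geq 2N/(N+2)$, and the stricter threshold is required by the Strong Comparison Principle for $-\Delta_p u + a(u)|\nabla u|^q$ (see the discussion after the statement of the theorem, and [Mon, Theorem 1.2]). Likewise $p=\max_k p_k$ in $(h_\Gamma)$ is chosen so that Corollary \ref{cor.pqCap} gives $\mathrm{Cap}_{p_i}(\Gamma)=0$ for every $i$, not to tune H\"older exponents. Second, your single test function $\varphi_i=(u_i-u_i^\lambda)^+\eta_\varepsilon^{p_i}$ does not suffice in the borderline case $q_i=p_i$: there the first-order term is critical with respect to the $p_i$-Laplacian and cannot be absorbed directly; the paper instead tests with $e^{-A_i(u_i)}w_{i,\lambda}^+\psi_\varepsilon^{p_i}$ and $e^{-A_i(u_{i,\lambda})}w_{i,\lambda}^+\psi_\varepsilon^{p_i}$ (where $A_i$ is an antiderivative of $a_i$) to cancel the critical gradient terms, and this case-splitting is absent from your proposal.
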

We remark that, although the technique that we will develop to prove Theorem \ref{thm:symmetry} works for any $p_i> N$, the result is stated in the range \eqref{eq:arnoldstrong} since there are no sets (different from the empty-set) of zero $p_i$-capacity when $p_i>N$.

We now want to spend a few comments about 
Theorem \ref{thm:symmetry}.
Firstly, we notice that Theorem \ref{thm:symmetry} extends the results in \cite{EMM} to the case of 
singular (non regular) solutions. 
The main novelty of our result concerns the singular case $p_i<2$ (for some $i=1,\ldots,m$). 
In order to 
explain this,
let us consider the case of a scalar equation with $a(u)\equiv 0$: our problem boils then down to the case 
considered in \cite{EMS} (which is therefore our benchmark case), that is, 
$$\left\{\begin{array}{rl}
 -\Delta_{p}u  = 
 f(u) & \textrm{in } \Omega \setminus \Gamma\\
 u >0 & \textrm{in } \Omega\setminus \Gamma\\
 u = 0 & \textrm{on } \partial \Omega.
\end{array}\right.$$
In \cite{EMS} the Authors, already in  the scalar  case, require some a priori assumptions on the nonlinearity $f$ involved in the problem, while here we are able to remove  the growth a priori assumption made in \cite{EMS} on the nonlinearities $f_i$. This aspect 
is strictly related with a technical issue that has to be faced when using the {\it integral version} of 
the moving plane method. In particular, one needs to apply a weighted Sobolev inequality due to Trudinger 
\cite{Tru}, whose validity depends on proper summability conditions of the weight, which happens to be of 
the form $|\nabla u|^{p-2}$. In \cite{EMS} this condition is proved to be satisfied after a nice study of 
the behavior of the gradient of the solution near the singular set $\Gamma$, 
which is based on a subtle 
growth estimate proved in \cite{PQS}, thanks to a priori assumption on the nonlinearity $f$ involved in the problem. Since we are lacking an analogous estimate for our solutions (even in 
the simpler case $a_i \equiv 0$ for every $i=1,\ldots,m$), we are not even entitled to profit of the 
weighted Sobolev inequality of Trudinger. Nevertheless, we 
can avoid such a priori assumptions by exploiting some a priori regularity results that we  prove in this paper, see Lemma \ref{leaiuto} and Lemma \ref{lem:SobolevSplitting}  below. 
We stress that this is a considerable simplification 
of the proof in \cite{EMS} and it is 
crucially based on the new Lemma \ref{lem:SobolevSplitting}.

Now, by scrutinizing the proof of Theorem \ref{thm:symmetry}, one can easily
recognize that a key ingredient for our argument is the fact that,
under assumption $({h}_{p,q})$, we have 
$$p_i^* = \frac{Np_i}{N-p_i}\geq 2\qquad(\text{for all $i = 1,\ldots,m$)}.$$
However, since the above inequality is equivalent to require that
$p_i \geq 2N/(N+2)$,
it is natural to wonder why 
we \emph{require} $p_i$ to satisfy the worse lower bound
$$p_i > \frac{2N+2}{N+2} =: \beta_N.$$
The main reason behind this choice is that, when $p \leq \beta_N$, we 
\emph{do not have}
at our disposal a Strong Comparison Principle
for the operator $-\Delta_pu+a(u)|\nabla u|^q$ which 
allows us to do not take care of the \emph{critical set} $\mathcal{Z} = \{\nabla u = 0\}$ that is, in general,  a crucial point when one deals with the $p$-Laplace operator.

In view of this fact, in order 
to establish Theorem~\ref{thm:symmetry} when
$$\frac{2N}{N+2}\leq p_i \leq \frac{2N+2}{N+2}$$
we would need to recover a technical lemma analogous to \cite[Lemma 3.2]{EMS}, which also requires 
suitable estimates for the second-order derivatives of the solution
of \eqref{eq:System}. Even if these estimates are available in our setting (see 
\cite[Theorem 2.1]{EMM}), we prefer
to avoid this technicality here: indeed, we believe that the main novelties of our work
are both to consider systems involving general first order terms  and the possibility of considering the case $p_i < 2$ without the need of prescribing  precise
 (a priori) growth estimates for the solution of \eqref{eq:System}. To the best of our knowledge, our result is new also in the case of a single scalar equation.  

\medskip
 The paper is organized as follows: we prove some technical results in Section \ref{sec.preliminaries} that we will exploit in
Section  \ref{sec:proofThm} to prove  Theorem \ref{thm:symmetry}.

\section{Notation and preliminary results} \label{sec.preliminaries}
 In this first section we collect all the relevant notation
 which shall be used throughout the paper; moreover, we review some
 results \emph{already known} in the literature which shall be fundamental for the 
 proof of our Theorem \ref{thm:symmetry}. We will adopt the symbol $|A|$ to denote the $N$-dimensional 
 Lebesgue measure of a measurable set $A \subseteq \mathbb{R}^N$.
 
 \subsection{Capacity.} 
 Due to the important role played by assumption $(h_\Gamma)$ in our ar\-gu\-ments, we briefly
 recall a few notions and results concerning the \emph{Sobolev capacity}.
 \vspace*{0.1cm}
 
 Let $K\subseteq\R^N$ be a \emph{compact set},
 and let $\mathcal{O}\subseteq\R^N$ be an open set such that $K\subseteq\mathcal{O}$.
 Given any $1 < r \leq N$, the $r$-capacity 
 of the \emph{condenser} $\mathcal{C} := (K,\mathcal{O})$
 is defined as
 $$\Cp_r(K,\mathcal{O}) 
 := \inf \bigg\{\int_{\mathbb{R}^N}|\nabla \phi|^r\,\d x\,:\,
  \text{$\phi \in C^{\infty}_c(\mathcal{O})$ and $\phi\geq 1$ on $K$}\bigg\}.$$
 We then say that $K$ has \emph{vanishing $r$-capacity}, and we write
 $\Cp_r(K) = 0$, if
 $$\Cp_r(K,\mathcal{O}) = 0\quad\text{\emph{for every} open set $\mathcal{O}\supset K$}.$$
 As is reasonable to expect, compact sets with vanishing $r$-capacity
 have to be \emph{very small}; the next theorem shows that
 this is actually true in the sense of Hausdorff measure.
 \begin{thm} \label{thm.propCp}
 Let $K\subseteq\R^N$ be a compact set. Then, the following assertions hold.
 \begin{enumerate}
   \item If $\Cp_r(K) = 0$, then $\mathcal{H}^s(K) = 0$ for every $s > N - r$.
   \vspace*{0.05cm}
   
   \item If $r < N$ and $\mathcal{H}^{N-r}(K) < \infty$, then $\Cp_r(K) = 0$.
\end{enumerate}  
\end{thm}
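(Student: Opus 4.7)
This is a classical result from nonlinear potential theory; I expect the authors simply to cite a standard reference (the monograph of Heinonen--Kilpel\"ainen--Martio is the canonical one), but let me nonetheless sketch how the two items can be proved from first principles.

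For (1), fix $s>N-r$ and a bounded open set $\mathcal{O}\supset K$. From $\Cp_r(K,\mathcal{O})=0$ I choose admissible cut-offs $\phi_n\in C_c^\infty(\mathcal{O})$ with $\phi_n\geq 1$ on $K$ and $\|\nabla\phi_n\|_{L^r(\mathcal{O})}\to 0$. The pointwise Riesz potential bound
\[
\phi_n(x) \leq c_N \int_{\mathcal{O}} \frac{|\nabla \phi_n(y)|}{|x-y|^{N-1}}\,d y
\]
together with a Frostman-type weak-type covering lemma yields
\[
\mathcal{H}^s_\infty(K) \leq C(N,r,s)\,\|\nabla \phi_n\|_{L^r(\mathcal{O})}^{\,r},
\]
where $\mathcal{H}^s_\infty$ is the $s$-dimensional Hausdorff content. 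Letting $n\to\infty$ gives $\mathcal{H}^s_\infty(K)=0$, hence $\mathcal{H}^s(K)=0$.

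For (2), assuming $r<N$ and $\mathcal{H}^{N-r}(K)<\infty$, the natural plan is to cover and then glue. Fixing an open $\mathcal{O}\supset K$ and $\delta>0$, the hypothesis provides a covering of $K$ by balls $B(x_i,\rho_i)\subset \mathcal{O}$ with $\rho_i<\delta$ and $\sum_i \rho_i^{N-r}$ controlled uniformly in $\delta$. Standard smooth bump cut-offs $\phi_i$ on the doubled balls $B(x_i,2\rho_i)$, with $|\nabla \phi_i|\leq C/\rho_i$, glued as $\phi:=\min\{1,\sum_i\phi_i\}$, produce an admissible test function with $\int|\nabla\phi|^r\,dx\leq C\sum_i\rho_i^{N-r}$.

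The main obstacle --- and the place where the proof is truly subtle --- is that this crude covering estimate only gives the \emph{finite} bound $\Cp_r(K,\mathcal{O})\leq C\,\mathcal{H}^{N-r}(K)$, not the required vanishing. To upgrade it to $\Cp_r(K,\mathcal{O})=0$ one needs the sharper potential-theoretic fact that a set of $\sigma$-finite $(N-r)$-dimensional Hausdorff measure is $r$-thin at each of its points; alternatively one can exploit the reflexivity of $W^{1,r}$ for $r>1$ by normalizing the capacitary potentials, extracting a weak limit via Mazur's lemma, and arguing that this limit must vanish quasi-everywhere on $K$. The precise implementation of this last step is entirely standard and, since the statement enters the paper only as a convenient preliminary tool, I would simply appeal to the reference rather than reproducing the argument.
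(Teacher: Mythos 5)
Your guess is exactly right: the paper proves nothing here and simply refers the reader to \cite[Sec.\,2.24]{HeinBook}, so your decision to appeal to the same reference matches the paper. Your supplementary sketches are reasonable background (the Riesz-potential/Hausdorff-content weak-type estimate for part (1) is the standard route, and for part (2) you correctly identify that the naive covering only gives $\Cp_r(K)\leq C\,\mathcal{H}^{N-r}(K)$ rather than vanishing, which is indeed the delicate point in the HKM proof), but since the paper does not reproduce any of this, no further comparison is warranted.
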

 For a proof of Theorem \ref{thm.propCp}, 
 we refer the Reader to \cite[Sec.\,2.24]{HeinBook}.
 \begin{cor} \label{cor.pqCap}
  Let $1< r_1 < r_2 \leq N$ and let $K\subseteq\R^N$ be compact.
  Then,
  $$\Cp_{r_2}(K) = 0\,\,\Longrightarrow\,\,\Cp_{r_1}(K) = 0.$$
 \end{cor}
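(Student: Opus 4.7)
The proof should be a direct combination of the two halves of Theorem \ref{thm.propCp}, passing through Hausdorff measure. The plan is to use $\mathrm{Cap}_{r_2}(K)=0$ together with part (1) of the theorem to conclude that $K$ has vanishing Hausdorff measure at a dimension suitable for applying part (2) with exponent $r_1$.

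First, I would observe that the hypothesis $r_1<r_2\leq N$ in particular forces $r_1<N$, so that part (2) of Theorem \ref{thm.propCp} is indeed applicable with $r=r_1$. Next, applying part (1) of Theorem \ref{thm.propCp} to the assumption $\mathrm{Cap}_{r_2}(K)=0$, I obtain that $\mathcal{H}^s(K)=0$ for every $s>N-r_2$. Since $r_1<r_2$ yields $N-r_1>N-r_2$, the choice $s=N-r_1$ is admissible, which gives $\mathcal{H}^{N-r_1}(K)=0$. In particular $\mathcal{H}^{N-r_1}(K)<\infty$, so part (2) of Theorem \ref{thm.propCp} with $r=r_1<N$ immediately yields $\mathrm{Cap}_{r_1}(K)=0$, which is the desired conclusion.

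There is really no obstacle here beyond checking that the indices line up: one must only verify that $r_1<N$ (so that part (2) is available) and that $N-r_1$ genuinely exceeds the threshold $N-r_2$ coming from part (1). As a sanity check, an alternative and fully self-contained route would bypass Theorem \ref{thm.propCp} and proceed directly from the definition: for any bounded open $\mathcal{O}\supset K$ and any admissible $\phi\in C_c^\infty(\mathcal{O})$ with $\phi\geq 1$ on $K$, H\"older's inequality gives
\[
\int_{\mathbb{R}^N}|\nabla\phi|^{r_1}\,\d x\leq |\mathcal{O}|^{1-r_1/r_2}\left(\int_{\mathbb{R}^N}|\nabla\phi|^{r_2}\,\d x\right)^{r_1/r_2},
\]
so that $\mathrm{Cap}_{r_1}(K,\mathcal{O})\leq |\mathcal{O}|^{1-r_1/r_2}\,\mathrm{Cap}_{r_2}(K,\mathcal{O})^{r_1/r_2}=0$; monotonicity of the condenser capacity with respect to the outer set (more test functions are admissible when $\mathcal{O}$ grows) then extends the conclusion to every open $\mathcal{O}\supset K$. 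However, since Theorem \ref{thm.propCp} has just been recorded in the paper, the Hausdorff-measure proof sketched above is the shortest and most natural one to present.
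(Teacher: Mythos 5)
Your proof is correct and follows exactly the paper's own argument: apply Theorem \ref{thm.propCp}\,(1) to get $\mathcal{H}^{N-r_1}(K)=0$, then apply part (2) with $r=r_1<N$. The alternative H\"older-based argument you sketch as a sanity check is also valid, but not what the paper uses.
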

 \begin{proof}
  Since, by assumption, $\Cp_{r_2}(K) = 0$, by Theorem \ref{thm.propCp}-(1)
  we have $\mathcal{H}^s(K) = 0$ for every $s > N-r_2$;
  in particular, since $r_1 < r_2$, we derive that
  $$r_1 < N\qquad\text{and}\qquad \mathcal{H}^{N-r_1}(K) = 0.$$
  Using this fact and Theorem \ref{thm.propCp}-(2), we then
  conclude that $\Cp_{r_1}(K) = 0$.
 \end{proof}
 On account of Corollary \ref{cor.pqCap}, if $\Gamma\subseteq\Omega\cap\Pi_0$
 is as in Theorem \ref{thm:symmetry}, we have
 \begin{equation} \label{eq:GammaCpizero}
  \Cp_{p_i}(\Gamma) = 0\qquad\text{for every $i = 1,\ldots,m$}.
 \end{equation} 
 \noindent Let now $K\subseteq\R^N$ be a fixed compact set \emph{with vanishing $r$-capacity}, and let
 $\mathcal{O}\subseteq\R^N$ be an open set containing $K$. We aim to show  
 that it is possible to construct a family
 of functions in $\R^N$, say $\{\psi_\e\}_\e$, 
 which satisfies the following properties:
 \begin{enumerate}
  \item $\psi_\varepsilon\in \mathrm{Lip}(\R^N)$ and $0\leq \psi_\e\leq 1$ pointwise in $\R^N$;
  \item there exists
  an open neighborhood $\mathcal{V}_\e\subseteq\mathcal{O}$ of $K$ such that
  $$\text{$\psi_\e\equiv 0$ on $\mathcal{V}_\e$};
  $$
  \item $\psi_\e(x)\to 1$ as $\e\to 0^+$ for a.e.\,$x\in\R^N$;
  \item there exists a constant $C_0 > 0$, only depending on $r$, such that
  $$\int_{\R^N}|\nabla \psi_\e|^r\,d x \leq C_0\varepsilon.$$
 \end{enumerate}
 In fact, let $\varepsilon_0 = \e_0(K,\mathcal{O}) > 0$ be such that
 $$\mathcal{B}_\varepsilon := \big\{x\in\R^N:\,\mathrm{dist}(x,K) < \varepsilon\big\}
 \Subset \mathcal{O}\qquad\text{for every $0<\e<\e_0$}.$$
 Since $\Cp_r(K) = 0$ and $\mathcal{B}_\e\subseteq\R^N$ is an open neighborhood
 of $K$, for every $\e\in (0,\e_0)$
 there exists a smooth function $\phi_\e\in C_c^\infty(\mathcal{B}_\e)$ such that
 \begin{equation} \label{eq:propphie}
 \text{$\phi_\e \geq 1$ on $K$}\quad \text{and}\quad
 \int_{\R^N}|\nabla \phi_\e|^r\,d x < \varepsilon.
 \end{equation}
  We then consider the Lipschitz functions
 \begin{itemize}
  \item[(i)] $T(s) := \max\{0;\min\{s;1\}\}$ (for $s\in\R$), \vspace*{0.1cm}
  \item[(ii)] $g(t) := \max\{0;1-2t\}$ (for $t\geq 0$)
 \end{itemize}
 and we define, for every $\varepsilon\in (0,\varepsilon_0)$,
 \begin{equation} \label{test1}
  \psi_\e:\R^N\to\R,\qquad 
  \psi_{\varepsilon}(x):= g\big(T(\varphi_{\varepsilon}(x))\big).
 \end{equation}
 Clearly, $\psi_\e\in\mathrm{Lip}(\R^N)$ and $0\leq\psi_\e\leq 1$ in $\R^N$; moreover,
 by \eqref{eq:propphie} we have
 $$\int_{\R^N}|\nabla\psi_\e|^r\,d x\leq 2^r\int_{\R^N}|\nabla\phi_\e|^r\,d x
 \leq 2^r\e.$$
 Hence,  the family $\{\psi_\e\}_\e$ satisfies
 properties (1) and (4) (with $C_0 = 2^r$).
 As for the validity of property (2) we observe that,
 by the explicit definitions of $T$ and $g$, we have
 $$\psi_\e\equiv 0\quad\text{on $\mathcal{V}_\e := \big\{\phi_\e > 1/2\big\}$};$$
 as a consequence, since $\mathcal{V}_\e$ is an open neighborhood of $K$ and
 since $\mathcal{V}_\e\subseteq\mathcal{B}_\e\subseteq \mathcal{O}$ (remind that
 $\phi_\e\geq 1$ on $K$ and $\mathrm{supp}(\phi_\e)\subseteq\mathcal{B}_\e$), 
 we immediately conclude that also property (2) is sa\-ti\-sfied.
 Finally, the validity of property (3) follows from the fact that
 \begin{equation} \label{eq:psiequiv1}
  \psi_\e \equiv g(0) = 1\quad\text{on $\R^N\setminus\mathcal{B}_\e$}.
  \end{equation}
 Indeed, since the family $\{\mathcal{B}_\e\}_{\e}$ shrinks to $K$ as $\e\to 0^+$, by
 \eqref{eq:psiequiv1} we get
 $$\text{$\psi_\e(x)\to 1$ as $\e\to 0^+$ \quad \emph{for every $x\in\R^N\setminus K$}};$$
 from this, since $|K| = \mathcal{H}^N(K) = 0$ (remind that $\Cp_p(K) = 0$ and
 see Theorem \ref{thm.propCp}), we immediately conclude 
 that the family $\{\psi_\e\}_\e$ satisfies also property (3), as claimed.
 \medskip
 
 Throughout what follows, we will repeatedly use the family $\{\psi_\e\}_\e$
 with dif\-fe\-rent choices of $K$ and $\mathcal{O}$; hence, to simplify the notation,
 we shall refer to this family as a \emph{cut-off family for the compact set
 $K$, related with the open set $\mathcal{O}$}.
 \subsection{Notations for the moving plane method.}
 Let
 $\mathbf{u} 
 \in C^{1,\alpha}(\overline{\Omega}\setminus\Gamma;\R^m)$ be a weak so\-lu\-tion
 to problem \eqref{eq:System}.
 For every fixed $\lambda\in\R$, we denote by $R_\lambda$
 the reflection trough the hy\-per\-pla\-ne $\Pi_\lambda := \{x\in\R^N:\,x_1 = \lambda\}$,
 that is,
 \begin{equation} \label{eq.defRlambda}
  R_\lambda(x) = x_\lambda := (2\lambda-x_1,x_2,\ldots,x_N);
 \end{equation}
 accordingly, we introduce the vector-valued function
 \begin{equation} \label{eq.defulambda}
  \mathbf{u}_\lambda(x) =
  (u_{1,\lambda}(x),\ldots,u_{m,\lambda}(x)) := \mathbf{u}(x_\lambda), \qquad
  \text{for $x\in R_\lambda(\overline{\Omega}\setminus\Gamma)$}.
 \end{equation}
 We point out that, since $\mathbf{u}$ solves \eqref{eq:System}, it is easy to see that
 \begin{enumerate}
  \item  
 $\mathbf{u}_\lambda\in C^{1,\alpha}(R_\lambda(\overline{\Omega}\setminus\Gamma);\R^m)$;
  \item for every $i = 1,\ldots,m$ and
  every $\varphi\in C^1_c(R_\lambda(\Omega\setminus\Gamma))$,
  one has
  \begin{equation} \label{eq.PDEulambda}
  \int_{R_\lambda(\Omega)}|\nabla u_{i,\lambda}|^{p_i-2}
  \langle \nabla u_{i,\lambda},\nabla \varphi\rangle\,d x
  + \int_{R_\lambda(\Omega)}a_i(u_{i,\lambda})|\nabla u_{i,\lambda}|^{q_i}\varphi\,d x
  = \int_{R_\lambda(\Omega)}f_i(\mathbf{u}_\lambda)\varphi\,\d x;
  \end{equation}
    \item 
  $u_{i,\lambda} > 0$ pointwise $R_\lambda(\Omega\setminus\Gamma)$ (for every $i = 1,\ldots,m$);
  \item $u_{i,\lambda} \equiv 0$ on $R_\lambda(\partial\Omega)$
  (for every $i = 1,\ldots,m$). 
 \end{enumerate}
 To proceed further, we let
 \begin{equation} \label{eq.defaOmega}
  \varrho = \varrho_\Omega := \inf_{x\in\Omega}x_1
 \end{equation}
 and we observe that, since $\Omega$ is 
 bounded and symmetric with respect to
 $\Pi_0$,
 we certainly have $-\infty < \varrho < 0$.
 As a consequence, for every $\lambda\in(\varrho,0)$ we can set
 \begin{equation} \label{eq.defOmegalambda}
 \Omega_\lambda := \{x\in\Omega:\,x_1<\lambda\}.
 \end{equation}
 We explicitly point out that, since
 $\Omega$ is convex in the $x_1$-direction, we have
 \begin{equation} \label{eq.inclusionOmegalambda}
  \Omega_\lambda\subseteq R_\lambda(\Omega)\cap \Omega.
 \end{equation}
 Finally, for every $\lambda\in(\varrho,0)$ we define the function
 $$\mathbf{w}_\lambda(x) = 
 (w_{1,\lambda}(x),\ldots,w_{m,\lambda}(x)) :=
 (\mathbf{u}-\mathbf{u}_\lambda)(x), \qquad \text{for
 $x\in (\overline{\Omega}\setminus\Gamma)\cap R_\lambda(\overline{\Omega}\setminus\Gamma)$}.$$
 On account of \eqref{eq.inclusionOmegalambda}, 
 $\mathbf{w}_\lambda$ is surely well-posed on
 $\overline{\Omega}_\lambda\setminus R_\lambda(\Gamma)$.
 
 \subsection{Preliminary results.}
 After these preliminaries, we 
 devote the remaining part of this section
 to collect some auxiliary results which shall
 be useful for the proof of Theorem \ref{thm:symmetry}.
 In what follows, we tacitly inherit all the notation
 introduced so far.
 \medskip 
 
 To begin with, we recall some
 identities between vectors in $\R^N$ which
 are useful in dealing
 with quasilinear operators:
 \emph{for every $s> 1$ 
 there exist constants $C_1,\ldots,C_4 > 0$, only depending on $s$, such that,
 for every $\eta,\eta'\in\R^N$, one has}
 \begin{equation}\label{eq:inequalities}
  \begin{split}
 & \langle |\eta|^{s-2}\eta-|\eta'|^{s-2}\eta', \eta- \eta'\rangle \geq C_1
(|\eta|+|\eta'|)^{s-2}|\eta-\eta'|^2, \\[0,15cm]
& \big| |\eta|^{s-2}\eta-|\eta'|^{s-2}\eta '| \leq C_2
(|\eta|+|\eta'|)^{s-2}|\eta-\eta '|,\\[0.15cm]
& \langle |\eta|^{s-2}\eta-|\eta'|^{s-2}\eta ', \eta-\eta ' \rangle \geq C_3
|\eta-\eta '|^s \qquad (\text{if $s\geq 2$}), \\[0.15cm]
& \big| |\eta|^{s-2}\eta-|\eta'|^{s-2}\eta '| \leq C_4|\eta-\eta'|^{s-1}
\qquad (\text{if $1<s<2$}).
\end{split}
\end{equation}
We refer, e.g., to \cite{Da2} for a proof of 
\eqref{eq:inequalities}. \medskip

 We then establish the following fundamental lemma.
 \begin{lem}\label{leaiuto} Let assumptions $(h_\Omega)$-to-$(h_f)$ be in force. 
 Let 
   $i \in\{1,\ldots,m\}$ be fixed, and let $\lambda\in(\varrho,0)$ be such that
   $$R_\lambda(\Gamma)\cap\overline{\Omega}_\lambda\neq \varnothing.$$
   Then, there exists a constant $\mathbf{c} = \mathbf{c}_i > 0$ such that
	\begin{equation} \label{eq.lemmaSumm}
    \int_{\Omega_\lambda} \left(|\nabla u_i| + 
    |\nabla u_{i,\lambda}|\right)^{p_i -2} |\nabla w_{i,\lambda}^{+}|^2\,d x  \leq
	\mathbf{c}_i.
	\end{equation}
 \end{lem}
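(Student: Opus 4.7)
The plan is to test the weak formulations \eqref{eq:weaksoldef} (for $u_i$) and \eqref{eq.PDEulambda} (for the reflected $u_{i,\lambda}$) against a carefully chosen truncation of $w_{i,\lambda}^{+}$, subtract them on $\Omega_\lambda$, and then extract the desired estimate via the first pointwise inequality in \eqref{eq:inequalities}. The delicate point under the hypothesis $R_\lambda(\Gamma)\cap\overline{\Omega}_\lambda\neq\varnothing$ is that $u_{i,\lambda}$ may be singular on $R_\lambda(\Gamma)$, so the test function must vanish there.

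Since $\Cp_{p}(\Gamma)=0$ implies $\Cp_{p_i}(R_\lambda(\Gamma))=0$ by Corollary \ref{cor.pqCap} together with the reflection-invariance of capacity, I apply the cut-off construction of Section \ref{sec.preliminaries} to $K:=R_\lambda(\Gamma)\cap\overline{\Omega}_\lambda$ with $r=p_i$ and an open $\mathcal{O}\Subset R_\lambda(\Omega)$ containing $K$, obtaining a Lipschitz family $\{\psi_\e\}_\e$ satisfying $0\leq\psi_\e\leq 1$, $\psi_\e\equiv 0$ in a neighborhood of $R_\lambda(\Gamma)$, $\psi_\e\to 1$ a.e., and $\int|\nabla\psi_\e|^{p_i}\,\d x\leq C_0\e$. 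I then set $\varphi_\e := w_{i,\lambda}^{+}\,\psi_\e^{p_i}$, extended by zero. Since $\mathrm{dist}(\overline{\Omega}_\lambda,\Gamma)\geq|\lambda|$, both $u_i$ and $w_{i,\lambda}^{+}$ are bounded on $\Omega_\lambda$ (say by $M$); moreover $w_{i,\lambda}^{+}$ vanishes on $\partial\Omega_\lambda$ (on $\partial\Omega\cap\overline{\Omega}_\lambda$ since $u_i=0\leq u_{i,\lambda}$, and on $\Pi_\lambda\cap\Omega$ by reflection). Combined with the vanishing of $\psi_\e$ near $R_\lambda(\Gamma)$, this renders $\varphi_\e$ admissible in both \eqref{eq:weaksoldef} and \eqref{eq.PDEulambda} by density.

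Subtracting the two tested identities, expanding $\nabla\varphi_\e=\psi_\e^{p_i}\nabla w_{i,\lambda}^{+}+p_i\,w_{i,\lambda}^{+}\psi_\e^{p_i-1}\nabla\psi_\e$, and using the first inequality in \eqref{eq:inequalities} on the leading piece produces
$$
C_1\int_{\Omega_\lambda}\bigl(|\nabla u_i|+|\nabla u_{i,\lambda}|\bigr)^{p_i-2}|\nabla w_{i,\lambda}^{+}|^2\psi_\e^{p_i}\,\d x\leq |\mathbf{T}_{\mathrm{cut}}(\e)|+|\mathbf{T}_{\mathrm{fo}}(\e)|+\mathbf{T}_{\mathrm{reac}}(\e),
$$
where the three summands are, respectively, the cut-off remainder $p_i\!\int\!\langle|\nabla u_i|^{p_i-2}\nabla u_i-|\nabla u_{i,\lambda}|^{p_i-2}\nabla u_{i,\lambda},\nabla\psi_\e\rangle w_{i,\lambda}^{+}\psi_\e^{p_i-1}\,\d x$, the first-order term, and the reaction term $\int[f_i(\mathbf{u})-f_i(\mathbf{u}_\lambda)]\varphi_\e\,\d x$. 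The reaction term is bounded by $\int f_i(\mathbf{u})w_{i,\lambda}^{+}\,\d x<+\infty$ by dropping the nonnegative contribution $\int f_i(\mathbf{u}_\lambda)\varphi_\e\,\d x\geq 0$ (using $(h_f)(a)$ and $\varphi_\e\geq 0$). For the first-order term I split $a_i(u_i)|\nabla u_i|^{q_i}-a_i(u_{i,\lambda})|\nabla u_{i,\lambda}|^{q_i}$: the $u_i$-piece is trivially bounded on $\Omega_\lambda$, while for the $u_{i,\lambda}$-piece I exploit that $a_i(u_{i,\lambda})$ is bounded on $\mathrm{supp}\,w_{i,\lambda}^{+}$ (where $u_{i,\lambda}\leq u_i\leq M$) and that $|\nabla u_{i,\lambda}|^{q_i}\in L^1(\Omega_\lambda)$ via the standard $W^{1,p_i}_{\mathrm{loc}}(\Omega)$-regularity of singular solutions with zero-$p_i$-capacity singular set, combined with $q_i\leq p_i$.

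The main obstacle is the cut-off remainder $\mathbf{T}_{\mathrm{cut}}(\e)$. Bounding the vector difference by the second inequality in \eqref{eq:inequalities} (in the super-quadratic regime $p_i\geq 2$) and applying Young's inequality, I split $|\mathbf{T}_{\mathrm{cut}}(\e)|$ into a small multiple of the LHS (reabsorbed) plus a residual of the form $C\int_{\Omega_\lambda}(|\nabla u_i|+|\nabla u_{i,\lambda}|)^{p_i-2}|\nabla\psi_\e|^2\psi_\e^{p_i-2}\,\d x$; H\"older with conjugate exponents $(p_i/(p_i-2),\,p_i/2)$ controls this by $C\bigl(\int|\nabla\psi_\e|^{p_i}\bigr)^{2/p_i}\leq C(C_0\e)^{2/p_i}\to 0$. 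In the sub-quadratic range $p_i<2$ the same scheme applies, but with the fourth inequality in \eqref{eq:inequalities} replacing the second and H\"older invoked with conjugates $(p_i/(p_i-1),\,p_i)$. Passing to the limit $\e\to 0^{+}$ and invoking Fatou's lemma on the LHS (since $\psi_\e^{p_i}\to 1$ a.e.) then yields the claim. The hard part is precisely this cut-off estimate, where the vanishing $p_i$-capacity of $\Gamma$ must be coupled with sufficient global Sobolev integrability of $u_i$ in order to compensate for the possibly degenerate/singular nature of the $p_i$-Laplacian near the critical set.
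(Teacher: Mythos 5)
Your overall scheme — cut off near $R_\lambda(\Gamma)$, test with $w_{i,\lambda}^{+}\psi_\e^{p_i}$, and absorb via the first inequality in \eqref{eq:inequalities} — is the right starting point, and your crude bound on the reaction term is fine. However, the proposal has a genuine gap at exactly the point it identifies as ``the hard part'', and the gap recurs in two places.

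The crucial difficulty is that the reflected solution $u_{i,\lambda}$ may have a \emph{non-removable} singularity on $R_\lambda(\Gamma)\cap\Omega_\lambda$ (which is nonempty under the hypothesis of the lemma), and no a priori integrability of $\nabla u_{i,\lambda}$ over $\Omega_\lambda$ is available. Your treatment of the first-order term invokes ``$|\nabla u_{i,\lambda}|^{q_i}\in L^1(\Omega_\lambda)$ via the standard $W^{1,p_i}_{\mathrm{loc}}(\Omega)$-regularity of singular solutions with zero-$p_i$-capacity singular set'', but this is precisely what is \emph{not} available here: the weak formulation is only tested against $\varphi\in C^1_c(\Omega\setminus\Gamma)$, and the paper explicitly allows $u_i$ to have no Sobolev extension across $\Gamma$. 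Asserting $W^{1,p_i}_{\mathrm{loc}}(\Omega)$ regularity is circular, since producing an a priori integrability estimate of exactly this type is what Lemma~\ref{leaiuto} is for. The same hidden assumption reappears in your cut-off remainder: after Young's inequality and H\"older with exponents $(p_i/(p_i-2),p_i/2)$, the residual is bounded by $\bigl(\int_{\Omega_\lambda}(|\nabla u_i|+|\nabla u_{i,\lambda}|)^{p_i}\psi_\e^{p_i}\bigr)^{(p_i-2)/p_i}\bigl(\int|\nabla\psi_\e|^{p_i}\bigr)^{2/p_i}$, and the first factor is \emph{not} $\e$-uniformly bounded unless one already controls $|\nabla u_{i,\lambda}|^{p_i}$ near $\Gamma_\lambda$; so the claimed bound $|\mathbf{T}_{\mathrm{cut}}(\e)|\leq C(C_0\e)^{2/p_i}$ does not follow.

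What your argument is missing is the decomposition $\Omega_\lambda=\Omega^{(1)}_\lambda\cup\Omega^{(2)}_\lambda$, with $\Omega^{(1)}_\lambda=\{|\nabla u_{i,\lambda}|<2|\nabla u_i|\}$ and $\Omega^{(2)}_\lambda=\{|\nabla u_{i,\lambda}|\geq 2|\nabla u_i|\}$. On $\Omega^{(1)}_\lambda$ one has \eqref{auxiliar0}, so $|\nabla u_i|+|\nabla u_{i,\lambda}|<3|\nabla u_i|$ is bounded by $\|\nabla u_i\|_{L^\infty(\overline{\Omega}_\lambda)}$ (finite because $\overline{\Omega}_\lambda\cap\Gamma=\varnothing$), eliminating any dependence on the integrability of $\nabla u_{i,\lambda}$. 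On $\Omega^{(2)}_\lambda$ one has \eqref{auxiliar}, which trades $|\nabla u_{i,\lambda}|$ for $|\nabla w_{i,\lambda}^+|$, and the resulting contribution is reabsorbed into the left-hand side. This mechanism is needed for \emph{every} term that sees $|\nabla u_{i,\lambda}|$ — the cut-off remainder, the $a_i$-Lipschitz term, and the $|\nabla\cdot|^{q_i}$-difference term — and it is the step you cannot omit. Finally, the borderline case $q_i=p_i$ cannot be handled with the unweighted test function $w_{i,\lambda}^+\psi_\e^{p_i}$: the Young step used to absorb the first-order piece relies on the exponent $p_i/(p_i-q_i)$, which degenerates when $q_i=p_i$. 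The paper handles that case separately using the exponential-weighted tests $e^{-A_i(u_i)}w_{i,\lambda}^+\psi_\e^{p_i}$ and $e^{-A_i(u_{i,\lambda})}w_{i,\lambda}^+\psi_\e^{p_i}$, which rewrite the operator so that the gradient term is absorbed into the principal part; your proposal does not address this.
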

 \begin{proof}
 We first notice that, since $\lambda < 0$, by assumption $(h_\Gamma)$ 
 we have $ \overline{\Omega}_\lambda\cap \Gamma = \varnothing$.
 Thus, the function $\mathbf{u}
 = (u_1,\ldots,u_m)$ is of class $C^{1,\alpha}$
 on $\overline{\Omega}_\lambda$, and we can set
 $$M = M_{\mathbf{u}} := \max_{1\leq j\leq m}
 \big(\|u_j\|_{L^\infty(\overline{\Omega}_\lambda)}+
 \|\nabla u_j\|_{L^\infty(\overline{\Omega}_\lambda)}\big)
 < +\infty.$$
 Moreover, since 
 $u_i$ and $u_{i,\lambda}$ are non-negative, we have
 \begin{equation} \label{eq:estimwplusM}
  0\leq w_{i,\lambda}^+ = (u_i-u_{i,\lambda})^+
 \leq u_i\leq M\qquad\text{pointwise
 in $\overline{\Omega}_\lambda\setminus R_\lambda(\Gamma)$}.
 \end{equation}
 Now, to prove \eqref{eq.lemmaSumm} we distinguish two cases.
  \begin{itemize}
   \item[(i)] $\max\{1, p_i-1\} \leq q_i < p_i$;
   \vspace*{0.1cm}
   \item[(ii)] $q_i = p_i$.
  \end{itemize}
  \vspace*{0.1cm}
  
  \noindent \textbf{Case (i)}. First of all we observe that,
  since $R_\lambda$ is a bijective
  linear map and since $\Gamma$ has va\-ni\-shing $p_i$\--ca\-pa\-city
  (see \eqref{eq:GammaCpizero}), the compact set $\Gamma_\lambda := R_\lambda(\Gamma)$ satisfies
  $$\Cp_{p_i}(\Gamma_\lambda) = 0.$$
  As a consequence, if $\mathcal{O}_\lambda\subseteq\R^N$ is a fixed open neighborhood
  of $\Gamma_\lambda$, we can choose a \emph{cut-off family} $\{\psi_\e\}_{\e<\e_0}$
  for $\Gamma_\lambda$, related with the open set $\mathcal{O}_\lambda$.
  This means, precisely, that
  \begin{enumerate}
  \item $\psi_\varepsilon\in \mathrm{Lip}(\R^N)$ and $0\leq \psi_\e\leq 1$ pointwise in $\R^N$;
  \item there exists
  an open neighborhood $\mathcal{V}^{\lambda}_\e\subseteq\mathcal{O}_\lambda$ of $\Gamma_\lambda$ 
  such that
  $$\text{$\psi_\e\equiv 0$ on $\mathcal{V}^{\lambda}_\e$};
  $$
  \item $\psi_\e(x)\to 1$ as $\e\to 0^+$ for a.e.\,$x\in\R^N$;
  \item there exists a constant $C_0 > 0$, independent of $\e$, such that
  $$\int_{\R^N}|\nabla \psi_\e|^{p_i}\,d x \leq C_0\varepsilon.$$
 \end{enumerate}
  We now define, for every $\varepsilon \in (0,\e_0)$,
  the map
  $$\varphi_{i,\varepsilon}(x):= 
  \begin{cases}
  	w_{i,\lambda}^+(x)\,\psi_\varepsilon^{p_i}(x) = (u_i-u_{i,\lambda})^+(x)\,\psi_\varepsilon^{p_i}(x),
  	& \text{if $x\in\Omega_\lambda$}, \\
  	0, & \text{otherwise}.
  \end{cases}$$
We then claim that the following assertion hold:
\begin{itemize}
	\item[(i)] $\varphi_{i,\varepsilon} \in \mathrm{Lip}(\R^N);$
	
	\item[(ii)] $\operatorname{supp}(\varphi_{i, \varepsilon}) \subseteq \Omega_\lambda$ and $\varphi_{i, \varepsilon} \equiv 0$ near $\Gamma_\lambda$.
\end{itemize}
In fact, since $u_i \in C^{1,\alpha}(\overline{\Omega}_\lambda)$ and 
$u_{i,\lambda} \in C^{1,\alpha}(\overline{\Omega}_\lambda \setminus \Gamma_\lambda)$, we readily see that 
  $$\text{$w_{i,\lambda}^+\in \mathrm{Lip}(\overline{\Omega}_\lambda\setminus V)$
  \emph{for every o\-pen set $V\supseteq \Gamma_\lambda$}};$$
  from this, since
  $\psi_\varepsilon\in\mathrm{Lip}(\R^N)$
  and $\psi_\varepsilon\equiv 0$ on $\mathcal{V}^\lambda_\e\supseteq\Gamma_\lambda$,
  we get
  $\varphi_{i,\varepsilon} \in \mathrm{Lip}(\overline{\Omega}_\lambda)$.
  On the other hand, since 
  $\varphi_{i,\varepsilon}\equiv 0$ on $\partial\Omega_\lambda$, we easily conclude that
  $\varphi_{i,\ep}\in \mathrm{Lip}(\R^N)$,
  as claimed. As for assertion (ii), it follows from
  the definition of $\varphi_{i,\ep}$, jointly with the fact that
  $$\text{$\psi_\ep\equiv 0$ on $\mathcal{V}^\lambda_\ep\supseteq \Gamma_\lambda$}.$$
  On account of properties (i)-(ii) of $\varphi_{i,\ep}$,
   a standard density argument allows us to use
  $\varphi_{i,\ep}$ as a test function \emph{both in \eqref{eq:weaksoldef} and
  \eqref{eq.PDEulambda}}, obtaining
  \begin{equation}\label{eq:weakforsub}
	\begin{split}
	&
     \int_{\Omega_\lambda}
     \langle |\nabla u_i|^{p_i-2} \nabla u_i - |\nabla u_{i,\lambda}|^{p_i-2}\nabla u_{i,\lambda},\nabla
     \varphi_{i,\ep}\rangle\, dx \\
     &
     \qquad\qquad + \int_{\Omega_{\lambda}}\big(a_i(u_i)|\nabla u_i|^{q_i} - a_i(u_{i,\lambda})
     |\nabla u_{i,\lambda}|^{q_i}\big)\varphi_{i,\ep}\, dx \\
     & \quad
     = \int_{\Omega_\lambda}(f_i(\mathbf{u})-f_i(\mathbf{u}_\lambda))
     \varphi_{i,\ep}\, dx.
     \end{split}
  \end{equation}
  By unraveling the very definition of $\varphi_{i,\ep}$, we then obtain
  \begin{equation} \label{eq.identityIntermediate}
	\begin{split} 
     &
    \int_{\Omega_\lambda}
     \langle |\nabla u_i|^{p_i-2} \nabla u_i - 
     |\nabla u_{i,\lambda}|^{p_i-2}\nabla u_{i,\lambda},\nabla
     w_{i,\lambda}^{+}\rangle \psi_{\ep}^{p_i}\, dx
     \\
      &\qquad\qquad +p_i  \int_{\Omega_\lambda}
     \langle |\nabla u_i|^{p_i-2} \nabla u_i - |\nabla u_{i,\lambda}|^{p_i-2}\nabla u_{i,\lambda},\nabla
     \psi_{\ep}\rangle w_{i,\lambda}^{+} \psi_{\ep}^{p_i -1}\, dx \\
     & \qquad\qquad 
    + \int_{\Omega_{\lambda}}\big( a_i(u_i)|\nabla u_i|^{q_i} - 
    a_i(u_{i,\lambda})|\nabla u_{i,\lambda}|^{q_i}\big) w_{i,\lambda}^{+}\psi_{\ep}^{p_i}\, dx \\
    & \quad
    = \int_{\Omega_\lambda} (f_i(\mathbf{u})-f_i(\mathbf{u}_\lambda))
    w_{i,\lambda}^{+}\psi_{\ep}^{p_i}\,d x.
	\end{split}
  \end{equation}
  We now observe that the integral in the
  right-hand side of \eqref{eq.identityIntermediate} is actually
  performed on the set
  $A_{i,\lambda} := \{x\in\Omega_\lambda:\,u_i\geq u_{i,\lambda}\}\setminus \Gamma_\lambda$;
  moreover, we have
  \begin{equation} \label{eq:estimAilambda}
   0\leq u_{i,\lambda}(x)\leq u_i(x)
  \leq M\quad
  \text{for every $x\in A_{i,\lambda}$}.
  \end{equation}
The right hand side of \eqref{eq.identityIntermediate} can be arranged  as follows
\begin{equation} \label{lipf}
	\begin{split}
		&\int_{\Omega_\lambda}(f_i(\mathbf{u})-f_i(\mathbf{u}_\lambda))
		\,w_{i,\lambda}^+\,\psi_\ep^{p_i}\, dx = 
		\int_{\Omega_\lambda}[f_i(u_1,\ldots,u_m) - 
		f_i(u_{1, \lambda},\ldots,u_{m,\lambda})] \, w_{i,\lambda}^+\,\psi_\ep^{p_i}\, dx\\
		& \quad =\int_{\Omega_\lambda}\big[f_i(u_1,u_2,\ldots,u_m)-f_i(u_{1,\lambda},u_2,\ldots,u_m) 		
		  \\
		  & \quad\qquad\quad +f_i(u_{1,\lambda},u_2,\ldots,u_m) -
		  f_i(u_{1,\lambda},u_{2,\lambda},\ldots,u_{m,\lambda})\big]
		  \,w_{i,\lambda}^+\,\psi_\ep^{p_i}\, dx\\
	& \quad = \int_{\Omega_\lambda}
	\big[f_i(u_1, u_2,\ldots,u_m)-f_i(u_{1,\lambda},u_2,\ldots,u_m) \\
	&\quad\qquad\quad 
	+f_i(u_{1,\lambda},u_2,\ldots,u_m)-f_i(u_{1,\lambda},u_{2,\lambda} \ldots,u_m) \\
	& \quad\qquad\quad\,\,
	\vdots \\
	& \quad \qquad\quad
	+ f_i(u_{1,\lambda},u_{2,\lambda}, \ldots, u_i, \ldots,u_m)
	-f_i(u_{1,\lambda},u_{2,\lambda}\ldots, u_{i,\lambda}, \ldots,u_m)   \\
	& \quad \qquad\quad\,\,\vdots\\
    & \quad \qquad\quad
    +f_i(u_{1,\lambda},u_{2,\lambda}, u_{3, \lambda} 
    \ldots,{u}_m)-f_i(u_{1,\lambda},u_{2,\lambda}, u_{3,\lambda}\ldots,u_{m,\lambda})\big]
    \,w_{i,\lambda}^+\,\psi_\ep^{p_i}\, dx.
	\end{split}
\end{equation}
By \eqref{lipf} we have
\begin{equation} \label{finalf}
	\begin{split}
		\int_{\Omega_\lambda}&(f_i(\mathbf{u})-f_i(\mathbf{u}_\lambda)) 
		\,w_{i,\lambda}^+\,\psi_\ep^{p_i}\, dx=\int_{\Omega_\lambda} [f_i(u_1,u_2,...,u_m)-f_i(u_{1,\lambda},u_{2,\lambda},...,u_{m,\lambda})]\,w_{i,\lambda}^+\,\psi_\ep^{p_i}\, dx 
		\\
		&= \int_{\Omega_\lambda}
		\frac{f_i(u_1,u_2,\ldots,u_m)-f_i(u_{1,\lambda},u_2,\ldots,u_m)}
		{u_1-u_{1,\lambda}} (u_1-u_{1,\lambda}) w_{i,\lambda}^+\psi_\ep^{p_i}\, dx
		\\
		&\quad+ \int_{\Omega_\lambda}\frac{f_i(u_{1,\lambda},u_2,\ldots,u_m)-f_i(u_{1,\lambda},u_{2,\lambda},\ldots,u_m)}{u_2-u_{2,\lambda}} (u_2-u_{2,\lambda})w_{i,\lambda}^+\psi_\ep^{p_i}\, dx
		\\
		&\quad\,\,\vdots
		\\
		&\quad+ \int_{\Omega_\lambda}\frac{f_i(u_{1,\lambda},u_2,\ldots, u_i, \ldots, u_m)-f_i(u_{1,\lambda},u_{2,\lambda},\ldots, u_{i,\lambda}, \ldots, u_m)}{u_i-u_{i,\lambda}}(u_i-u_{i,\lambda})w_{i,\lambda}^+\psi_\ep^{p_i}\, dx
		\\
		&\quad\,\,\vdots
		\\
		&\quad+\int_{\Omega_\lambda}\frac{f_i(u_{1,\lambda},u_{2,\lambda}, \ldots,{u}_m)-f_i(u_{1,\lambda},u_{2,\lambda},\ldots,u_{m,\lambda})}{u_m-u_{m,\lambda}} (u_m-u_{m,\lambda}) w_{i,\lambda}^+ \psi_\ep^{p_i}\, dx\\&
		\leq
		\int_{\Omega_\lambda}
		\frac{f_i(u_1,u_2,\ldots,u_m)-f_i(u_{1,\lambda},u_2,\ldots,u_m)}
		{u_1-u_{1,\lambda}} (u_1-u_{1,\lambda})^+ w_{i,\lambda}^+\psi_\ep^{p_i}\, dx
		\\
		&\quad+ \int_{\Omega_\lambda}\frac{f_i(u_{1,\lambda},u_2,\ldots,u_m)-f_i(u_{1,\lambda},u_{2,\lambda},\ldots,u_m)}{u_2-u_{2,\lambda}} (u_2-u_{2,\lambda})^+w_{i,\lambda}^+\psi_\ep^{p_i}\, dx
		\\
		&\quad\,\,\vdots
		\\
		&\quad+ \int_{\Omega_\lambda}\frac{f_i(u_{1,\lambda},u_2,\ldots, u_i, \ldots, u_m)-f_i(u_{1,\lambda},u_{2,\lambda},\ldots, u_{i,\lambda}, \ldots, u_m)}{u_i-u_{i,\lambda}}(u_i-u_{i,\lambda})^+w_{i,\lambda}^+\psi_\ep^{p_i}\, dx
		\\
		&\quad\,\,\vdots
		\\
		&\quad+\int_{\Omega_\lambda}\frac{f_i(u_{1,\lambda},u_{2,\lambda}, \ldots,{u}_m)-f_i(u_{1,\lambda},u_{2,\lambda},\ldots,u_{m,\lambda})}{u_m-u_{m,\lambda}} (u_m-u_{m,\lambda})^+ w_{i,\lambda}^+ \psi_\ep^{p_i}\, dx
		\\
		&\leq \sum_{j=1}^{m}L_j\int_{\Omega_\lambda} w_{i,\lambda}^+ w_{j,\lambda}^+ \psi_\ep^{p_i}\, dx,
	\end{split}
\end{equation}
 where in the last inequality we thought  
 $$u_j -u_{j,\lambda}:=w_{j,\lambda}= w_{j,\lambda}^+-w_{j,\lambda}^-$$ and we used  the cooperativity 
 assumption of each $f_j$ for $j \neq i$, together with 
 the fact  that  $f_j$ is of class $C^1$ (see assumption ($h_f$)). Here, $L_j > 0$ is the Lipschitz  
 constant of $f_j$ on $[0,M]\times\cdots\times[0,M]$. Hence, resuming the computations above, we have 
  \begin{equation} \label{eq.estimfLipschitz}
   \begin{split}
   & \int_{\Omega_\lambda}(f_i(\mathbf{u})-f_i(\mathbf{u}_\lambda))
   \,w_{i,\lambda}^+\,\psi_\ep^{p_i}\, dx
   \leq \sum_{j = 1}^m L_j \int_{\Omega_\lambda} w_{j,\lambda}^+ \,w_{i,\lambda}^+ \,\psi_\ep^{p_i}\, dx.
   \end{split}
   \end{equation}
  On the other hand, by using the estimates in \eqref{eq:inequalities}, we get
  \begin{equation}\label{eq.NewBelowEstim}
  \begin{split}
  &
    \int_{\Omega_\lambda}
     \langle |\nabla u_i|^{p_i-2} \nabla u_i - |\nabla u_{i,\lambda}|^{p_i-2}\nabla u_{i,\lambda},\nabla
     w_{i,\lambda}^{+}\rangle \psi_{\ep}^{p_i}\, dx \\
     &\qquad
     \geq C_1\int_{\Omega_{\lambda}} 
     \big(|\nabla u_i| + |\nabla u_{i,\lambda}|\big)^{p_i -2}
     |\nabla w_{i,\lambda}^{+}|^2\,\psi_{\ep}^{p_i}\, dx. 
  \end{split}
  \end{equation}
  Gathering 
  together \eqref{eq.estimfLipschitz} and \eqref{eq.NewBelowEstim},
  from \eqref{eq.identityIntermediate} we then obtain
  \begin{equation} \label{eq.tostartfrom}
   \begin{split}
   & C_1\int_{\Omega_{\lambda}} 
     \big(|\nabla u_i| + |\nabla u_{i,\lambda}|\big)^{p_i -2}
     |\nabla w_{i,\lambda}^{+}|^2\,\psi_{\ep}^{p_i}\, dx \\
     & \qquad
     \leq 
     \int_{\Omega_\lambda}
     \langle |\nabla u_i|^{p_i-2} \nabla u_i - |\nabla u_{i,\lambda}|^{p_i-2}\nabla u_{i,\lambda},\nabla
     w_{i,\lambda}^{+}\rangle \psi_{\ep}^{p_i}\, dx \\
     & \qquad
     \leq p_i  \int_{\Omega_\lambda}
     \big| |\nabla u_i|^{p_i-2} \nabla u_i - |\nabla u_{i,\lambda}|^{p_i-2}\nabla u_{i,\lambda}
     \big|\,|\nabla
     \psi_{\ep}|\,w_{i,\lambda}^{+} \psi_{\ep}^{p_i -1}\, dx \\
     & \qquad\qquad 
    + \int_{\Omega_{\lambda}}\big| a_i(u_i)|\nabla u_i|^{q_i} - 
    a_i(u_{i,\lambda})|\nabla u_{i,\lambda}|^{q_i}\big| w_{i,\lambda}^{+}\psi_{\ep}^{p_i}\, dx \\
    & \qquad\qquad
    + \sum_{j = 1}^m L_j \int_{\Omega_\lambda}w_{j,\lambda}^+\,w_{i,\lambda}^+\,\psi_\ep^{p_i}\, dx \\
    & \qquad   \leq  p_i  \int_{\Omega_\lambda}
    \big| |\nabla u_i|^{p_i-2} \nabla u_i - |\nabla u_{i,\lambda}|^{p_i-2}\nabla u_{i,\lambda}
    \big|\,|\nabla
    \psi_{\ep}|\,w_{i,\lambda}^{+} \psi_{\ep}^{p_i -1}\, dx \\
     & \qquad\qquad + 
     \int_{\Omega_{\lambda}}\big|a_i(u_i)-a_i(u_{i,\lambda})\big|\,
     |\nabla u_{i,\lambda}|^{q_i}\,w_{i,\lambda}^{+}\,\psi_{\ep}^{p_i}\, dx \\
     & \qquad\qquad
     + \int_{\Omega_{\lambda}}|a_i(u_i)\big||\nabla u_i|^{q_i} - |\nabla u_{i,\lambda}|^{q_i}\big|
     w_{i,\lambda}^+\,\psi_{\ep}^{p_i}\, dx \\
     & \qquad\qquad
     + \sum_{j = 1}^m L_j \int_{\Omega_\lambda}w_{j,\lambda}^+\,w_{i,\lambda}^+\,\psi_\ep^{p_i}\, dx.
   \end{split}
  \end{equation}
  To proceed further, we now turn to provide ad-hoc estimates for the integrals in
  the right-ha\-nd side of \eqref{eq.tostartfrom}. To this end, we first introduce the notation
  \begin{align*}
  & F_i := \sum_{j = 1}^m L_j \int_{\Omega_\lambda}w_{j,\lambda}^+\,w_{i,\lambda}^+\,\psi_\ep^{p_i}\, dx.
  \end{align*}
  Moreover, we split the set $\Omega_\lambda$ as 
  $\Omega_\lambda = \Omega^{(1)}_\lambda\cup\Omega^{(2)}_\lambda,$
  where
  \begin{align*}
  & \Omega^{(1)}_\lambda = \{x\in\Omega_\lambda\setminus \Gamma_\lambda:
   \,|\nabla u_{i,\lambda}(x)|
   < 2|\nabla u_i|\}\qquad\text{and} \\[0.08cm]
  & \qquad \Omega^{(2)}_\lambda = \{
   x\in\Omega_\lambda\setminus \Gamma_\lambda:\,|\nabla u_{i,\lambda}(x)|
   \geq 2|\nabla u_i|\}.
   \end{align*}
Then it follows that
\begin{itemize}
	\item[-] By  the definition of $ \Omega^{(1)}_\lambda$, one has
	\begin{equation}\label{auxiliar0}
		|\nabla u_i| + |\nabla u_{i,\lambda} | < 3|\nabla u_i|.\end{equation}
	\item[-] By the definition  of the  set $\Omega^{(2)}_\lambda$ and standard triangular
	inequalities, one has
	\begin{equation} \label{auxiliar}
		\frac{1}{2} |\nabla u_{i,\lambda}| \leq |\nabla
		u_{i,\lambda}| - |\nabla u_i| \leq |\nabla w_{i,\lambda}| \leq |\nabla	u_{i,\lambda}| + |\nabla u_i| \leq \frac{3}{2} |\nabla
		u_{i,\lambda}|.
	\end{equation}
\end{itemize}
  Accordingly, we define
\begin{equation}
 \begin{split}
 	 & P_{i,1} :=   \int_{\Omega_\lambda^{(1)}}
 	 \big| |\nabla u_i|^{p_i-2} \nabla u_i - |\nabla u_{i,\lambda}|^{p_i-2}\nabla u_{i,\lambda}
 	 \big|\,|\nabla   \psi_{\ep}|\,w_{i,\lambda}^{+} \psi_{\ep}^{p_i -1}\, dx \\
 	 & P_{i,2} :=   \int_{\Omega_\lambda^{(2)}}
 	 \big| |\nabla u_i|^{p_i-2} \nabla u_i - |\nabla u_{i,\lambda}|^{p_i-2}\nabla u_{i,\lambda}
 	 \big|\,|\nabla   \psi_{\ep}|\,w_{i,\lambda}^{+} \psi_{\ep}^{p_i -1}\, dx \\
     & I_{i,1}:= \int_{\Omega^{(1)}_{\lambda}}\big|a_i(u_i)-a_i(u_{i,\lambda})\big|\,
     |\nabla u_{i,\lambda}|^{q_i}\,w_{i,\lambda}^{+}\,\psi_{\ep}^{p_i}\, dx \\
     & I_{i,2}:= \int_{\Omega^{(2)}_{\lambda}}\big|a_i(u_i)-a_i(u_{i,\lambda})\big|\,
     |\nabla u_{i,\lambda}|^{q_i}\,w_{i,\lambda}^{+}\,\psi_{\ep}^{p_i}\, dx \\
     & J_{i,1}:= \int_{\Omega^{(1)}_{\lambda}}
     |a_i(u_i)|\big||\nabla u_i|^{q_i} - |\nabla u_{i,\lambda}|^{q_i}\big|
     w_{i,\lambda}^+\,\psi_{\ep}^{p_i}\, dx \\
     & J_{i,2}:= \int_{\Omega^{(2)}_{\lambda}}
     |a_i(u_i)|\big||\nabla u_i|^{q_i} - |\nabla u_{i,\lambda}|^{q_i}\big|
     w_{i,\lambda}^+\,\psi_{\ep}^{p_i}\, dx.
 \end{split}
 \end{equation}
 We then turn to estimate all the above integrals. In what follows, 
 we denote by the same $C$ any positive constant
 which is independent of $\e$ (but possibly depending on $i$).
 \medskip
 
 \emph{-\,\,Estimate of $P_{i,1}$.} If $1<p_i < 2$, from \eqref{eq:inequalities}, \eqref{auxiliar0} and by H\"older's inequality we obtain
 \begin{equation}\label{eq:estP1singular}
 	\begin{split}
 		P_{i,1} & \leq C_{4} \int_{\Omega_\lambda^{(1)}} |\nabla w^+_{i,\lambda}|^{p_i-1} |\nabla
 		\psi_\varepsilon| \psi_\varepsilon^{p_i-1} w_{i,\lambda}^+ \,  dx\\
 		&\leq C_{4}
 		\left(\int_{\Omega_\lambda^{(1)}} |\nabla w_{i,\lambda}^+|^{p_i} \psi_\varepsilon^{p_i}
 		\, dx \right)^{\frac{p_i-1}{p_i}}
 		\left(\int_{\Omega_\lambda^{(1)}} |\nabla \psi_\varepsilon|^{p_i}
 		(w_{i,\lambda}^+)^{p_i} \, dx\right)^{\frac{1}{p_i}}\\
 		& \leq  C_{4} 
 		\left(\int_{\Omega_\lambda^{(1)}} (|\nabla u_i|+|\nabla
 		u_{i, \lambda}|)^{p_i} \psi_\varepsilon^{p_i} \, dx \right)^{\frac{p_i-1}{p_i}} \left( \int_{\Omega_\lambda^{(1)}} |\nabla	\psi_\varepsilon|^{p_i} (w_{i,\lambda}^+)^{p_i} \, dx \right)^{\frac{1}{p_i}}\\
 		& \leq  C_{4}  \left(3^{p_i}
 		\int_{\Omega_\lambda^{(1)}}
 		|\nabla u_i|^{p_i} \psi_\varepsilon^{p_i} \, dx  \right)^{\frac{p_i-1}{p_i}}\left( \int_{\Omega_\lambda^{(1)}} |\nabla \psi_\varepsilon|^{p_i} (w_{i,\lambda}^+)^{p_i} \, dx \right)^{\frac{1}{p_i}}\\
 		&\leq C \left(\int_{\Omega_\lambda}|\nabla u_i|^{p_i}\,dx \right)^{\frac{p_i-1}{p_i}}\left(
 		\int_{\Omega_\lambda} |\nabla \psi_\varepsilon|^{p_i} \, dx \right)^{\frac{1}{p_i}}.
 	\end{split}
 \end{equation}

\

\noindent If $p_i \geq 2$,  from \eqref{eq:inequalities} and the weighted Young's inequality we have
\[ \nonumber
\begin{split} P_{i,1}& \leq  C_{2} \int_{\Omega_\lambda^{(1)}} (|\nabla u_i| + |\nabla	u_{i,\lambda}|)^{p_i-2} |\nabla w^+_{i, \lambda}| \ |\nabla	\psi_\varepsilon| \psi_\varepsilon^{p_i-1} w_{i,\lambda}^+ \,dx\\
&\leq C\delta \int_{\Omega_\lambda^{(1)}} (|\nabla u_i| + |\nabla
u_{i,\lambda}|)^{p_i-2} |\nabla w_{i,\lambda}^+|^2 \psi_\varepsilon^{p_i}
\, dx\\
&\quad + \frac{C}{\delta} \int_{\Omega_\lambda^{(1)}} (|\nabla u_i| + \nabla	u_{i,\lambda}|)^{p_i-2} |\nabla \psi_\varepsilon|^2
\psi_\varepsilon^{p_i-2} (w_{i,\lambda}^+)^2 \, dx.
\end{split}
\]
Using \eqref{auxiliar0} and  H\"older's inequality, we obtain
\begin{equation}\label{eq:estP1degenerate}
	\begin{split}
		P_{i,1} &
		\leq C\delta \int_{\Omega_\lambda^{(1)}} (|\nabla u_i| + |\nabla u_{i, \lambda}|)^{p_i-2}
		|\nabla w_{i,\lambda}^+|^2 \psi_\varepsilon^{p_i} \, dx \\
		& \quad +\frac{C}{\delta} \int_{\Omega_\lambda^{(1)}} |\nabla u_i|^{p_i-2} 
		|\nabla \psi_\varepsilon|^2 \psi_\varepsilon^{p_i-2} (w_{i,\lambda}^+)^2 \, dx\\
		&\leq C\delta \int_{\Omega_\lambda^{(1)}} (|\nabla u_i| + |\nabla
		u_{i,\lambda}|)^{p_i-2} |\nabla w_{i,\lambda}^+|^2 \psi_\varepsilon^{p_i}
		\, dx  \\
		& \quad +  \frac{C}{\delta} 
		\left(\int_{\Omega_\lambda^{(1)}} |\nabla u_i|^{p_i} \psi_\varepsilon^{p_i}
		\, dx \right)^{\frac{p_i-2}{p_i}} \left(\int_{\Omega_\lambda^{(1)}} |\nabla
		\psi_\varepsilon|^{p_i} (w_{i,\lambda}^+)^{p_i} \, dx
		\right)^{\frac{2}{p_i}}
		\\
		&\leq C\delta \int_{\Omega_\lambda} (|\nabla u_i| + |\nabla
		u_{i,\lambda}|)^{p_i-2} |\nabla w_{i,\lambda}^+|^2 \psi_\varepsilon^{p_i}
		\, dx  \\
		&\quad +  \frac{C}{\delta} \left(\int_{\Omega_\lambda} |\nabla u_i|^{p_i}
		\, dx \right)^{\frac{p_i-2}{p_i}} \left(\int_{\Omega_\lambda} |\nabla
		\psi_\varepsilon|^{p_i} \, dx
		\right)^{\frac{2}{p_i}}.
	\end{split}
\end{equation}

\noindent  \emph{-\,\,Estimate of $P_{i,2}$.} If $1<p_i < 2$,  using the weighted Young's inequality
 and \eqref{auxiliar} we get
 
 \begin{equation}\label{eq:P2singularFinal}
 	\begin{split}
 		P_{i,2} &\leq C_{4}\int_{\Omega_\lambda^{(2)}} |\nabla w^+_{i,\lambda}|^{p_i-1} |\nabla
 		\psi_\varepsilon|  \psi_\varepsilon^{p_i-1} w_{i,\lambda}^+ \, dx\\
 		&\leq  C\delta \int_{\Omega_\lambda^{(2)}} |\nabla w_{i,\lambda}^+|^{p_i}
 		\psi_\varepsilon^{p_i} \, dx + \frac{C}{\delta} \int_{\Omega_\lambda^{(2)}}
 		|\nabla\psi_\varepsilon|^{p_i} (w_{i,\lambda}^+)^{p_i} \, dx
 		\\ 
 		& \leq  C\delta\int_{\Omega_\lambda^{(2)}} (|\nabla u_i| +
 		|\nabla u_{i, \lambda}|)^{p_i-2} \left(|\nabla u_i| + |\nabla
 		u_{i, \lambda}|\right)^2 \psi_\varepsilon^{p_i} \, dx + \frac{C}{\delta}
 		\int_{\Omega_\lambda^{(2)}} |\nabla\psi_\varepsilon|^{p_i} (w_{i,\lambda}^+)^{p_i} \,
 		dx\\
 		& \leq   C\delta\int_{\Omega_\lambda^{(2)}} (|\nabla u_i| + |\nabla
 		u_{i, \lambda}|)^{p_i-2} |\nabla u_{i, \lambda}|^2 \psi_\varepsilon^{p_i} \, dx +
 		\frac{C}{\delta} \int_{\Omega_\lambda^{(2)}} |\nabla\psi_\varepsilon|^{p_i}
 		(w_{i,\lambda}^+)^{p_i} \, dx\\
 		& \leq  C\delta  \int_{\Omega_\lambda^{(2)}}
 		(|\nabla u_i| + |\nabla u_{i, \lambda}|)^{p_i-2} |\nabla w_{i,\lambda}^+|^2
 			\psi_\varepsilon^{p_i} \, dx + \frac{C}{\delta} \int_{\Omega_\lambda^{(2)}}
 		|\nabla \psi_\varepsilon|^{p_i} (w_{i,\lambda}^+)^{p_i} \, dx\\ 
 		& \leq C\delta	 \int_{\Omega_\lambda} (|\nabla u_i| + |\nabla u_{i, \lambda}|)^{p_i-2}
 		|\nabla w^+_{i,\lambda}|^2 \psi_\varepsilon^{p_i} \, dx +
 		\frac{C}{\delta} \int_{\Omega_\lambda} |\nabla
 		\psi_\varepsilon|^{p_i} \, dx.
 	\end{split}
 \end{equation}

\noindent If $p_i \geq 2$, by the weighted Young's inequality and \eqref{eq:inequalities} we deduce that 
 \begin{equation} \nonumber
 	\begin{split}
 		P_{i,2}& \leq C_{i}\int_{\Omega_\lambda^{(2)}} (|\nabla u_i| + |\nabla u_{i, \lambda}|)^{p_i-2}
 		|\nabla w^+_{i,\lambda}| \ |\nabla \psi_\varepsilon|
 		\psi_\varepsilon^{p_i-1} w_{i,\lambda}^+ \, dx\\
 		& \leq C\delta \int_{\Omega_\lambda^{(2)}} (|\nabla u_i| + |\nabla
 		u_{i, \lambda}|)^{\frac{p_i(p_i-2)}{p_i-1}} |\nabla
 		w_{i,\lambda}^+|^{\frac{p_i}{p_i-1}} \psi_\varepsilon^{p_i}
 		\, dx+ \frac{C}{\delta} \int_{\Omega_\lambda^{(2)}} 
 		|\nabla \psi_\varepsilon|^{p_i} (w_{i,\lambda}^+)^{p_i} \, dx\\
 		&= C\delta \int_{\Omega_\lambda^{(2)}} (|\nabla u_i| + |\nabla
 		u_{i, \lambda}|)^{\frac{p_i(p_i-2)}{p_i-1}} |\nabla w_{i,\lambda}^+|^2 |\nabla
 		w_{i,\lambda}^+|^{\frac{p_i}{p_i-1}-2} \psi_\varepsilon^{p_i}
 		\, dx \\
 		& \qquad\qquad + \frac{C}{\delta} \int_{\Omega_\lambda^{(2)}} |\nabla \psi_\varepsilon|^{p_i}
 		(w_{i,\lambda}^+)^{p_i} \, dx.
 	\end{split}
 \end{equation}
 Using \eqref{auxiliar} and noticing that
 \[\frac{p_i}{(p_i-1)}-2 \leq 0,\] we obtain the following
 estimate
 \begin{equation}\label{eq:P2degenerateFinal}
 	\begin{split}
 		P_{i,2} &\leq C\delta  \int_{\Omega_\lambda^{(2)}} |\nabla
 		u_{i, \lambda}|^{p_i-2} |\nabla w_{i,\lambda}^+|^2 \psi_\varepsilon^{p_i}
 		\, dx+ \frac{C}{\delta} \int_{\Omega_\lambda^{(2)}} 
 		|\nabla \psi_\varepsilon|^{p_i} (w_{i,\lambda}^+)^{p_i} \, dx
 		\\
 		& \leq C\delta \int_{\Omega_\lambda^{(2)}} (|\nabla u_i| + |\nabla
 		u_{i, \lambda}|)^{p_i-2} |\nabla w_{i,\lambda}^+|^2 \psi_\varepsilon^{p_i}
 		\, dx + \frac{C}{\delta}
 		\int_{\Omega_\lambda^{(2)}} |\nabla \psi_\varepsilon|^{p_i} \, dx\\
 		& \leq C\delta\int_{\Omega_\lambda} (|\nabla u_i| + |\nabla
 		u_{i, \lambda}|)^{p_i-2} |\nabla w_{i,\lambda}^+|^2 \psi_\varepsilon^{p_i}
 		\, dx + \frac{C}{\delta}
 		\int_{\Omega_\lambda} |\nabla \psi_\varepsilon|^{p_i} \, dx.
 	\end{split}
 \end{equation}
In the second line of \eqref{eq:P2degenerateFinal} we exploited the fact that, since $p_i\geq 2$ then \[|\nabla
 u_{i, \lambda}|^{p_i-2}\leq (|\nabla u_i| + |\nabla
 u_{i, \lambda}|)^{p_i-2}. \]
 Collecting
 \eqref{eq:estP1singular}, \eqref{eq:estP1degenerate},
 \eqref{eq:P2singularFinal} and \eqref{eq:P2degenerateFinal}  we
 deduce that for $p_i \geq 2$ it holds 
 \begin{equation*}
  \begin{split}
   & P_{i,1} + P_{i,2} \leq C\delta 
   \int_{\Omega_{\lambda}}\big(|\nabla u_i| 
   + |\nabla u_{i,\lambda}|\big)^{p_i -2} |\nabla w_{i,\lambda}^{+}|^2 \psi_{\ep}^{p_i}\,dx \\
   &\qquad 
   + \frac{C}{\delta}\bigg(\int_{\Omega_{\lambda}}|
   \nabla u_i|^{p_i}\psi_{\ep}^{p_i}\, dx\bigg)^{\frac{p_i -2}{p_i}}
   \bigg(\int_{\Omega_\lambda}
   |\nabla \psi_{\ep}|^{p_i}|w_{i,\lambda}^{+}|^{p_i}\,dx \bigg)^{\frac{2}{p_i}} \\[0.1cm]
   & \qquad
   \qquad
   + \frac{C}{\delta}\int_{\Omega_\lambda}|\nabla \psi_{\ep}|^{p_i}\, dx
   \qquad
   (\text{for every $\delta > 0$}),
  \end{split}
 \end{equation*}
 where $C > 0$ is a suitable constant depending on $p_i,\lambda,\,\Omega$
 and $M$.

\noindent In the same way, if $1<p_i<2$, we deduce that
  \begin{equation*}
 	\begin{split}
 		& P_{i,1}+P_{i,2}\leq C\delta	 \int_{\Omega_\lambda} 
 		(|\nabla u_i| + |\nabla u_{i, \lambda}|)^{p_i-2}
 		|\nabla w^+_{i,\lambda}|^2 \psi_\varepsilon^{p_i} \, dx\\
 		&\qquad+C \left(\int_{\Omega_\lambda}|\nabla u_i|^{p_i}\,dx \right)^{\frac{p_i-1}{p_i}}\left(
 		\int_{\Omega_\lambda} |\nabla \psi_\varepsilon|^{p_i} \, dx \right)^{\frac{1}{p_i}}\\
 		& \qquad
 		\qquad
 		+ \frac{C}{\delta}\int_{\Omega_\lambda}|\nabla \psi_{\ep}|^{p_i}\, dx
 		\qquad
 		(\text{for every $\delta > 0$}),
 	\end{split}
 \end{equation*}
 where $C > 0$ is a suitable constant depending on $p_i,\lambda,\,\Omega$
 and $M$.
 
 \noindent In both cases, taking into ac\-count
 \eqref{eq:estimwplusM} and the properties of $\psi_\e$, we derive the estimate
 \begin{equation}  \label{eq:estimPEMS}
 	\begin{split}
 		& P_{i,1}+P_{i,2} \leq C\delta
 		\int_{\Omega_{\lambda}}\big(|\nabla u_i| 
 		+ |\nabla u_{i,\lambda}|\big)^{p_i -2} |\nabla w_{i,\lambda}^{+}|^2 \psi_{\ep}^{p_i}\,dx 
 		+C_\delta\e^{1/p_i},
 	\end{split}
 \end{equation}
 holding true for every choice of $\delta > 0$.
 \medskip
 
 \emph{-\,\,Estimate of $I_{i,1}$.} Since the integral $I_{i,1}$
 is actually 
 performed on the set
 $$A_{i,\lambda} = \{u_i\geq u_{i,\lambda}\}\setminus \Gamma_\lambda,$$
 from \eqref{eq:estimAilambda}, assumption $(h_a)$ and the definition
 of $\Omega^{(1)}_\lambda$ we immediately obtain
 \begin{equation} \label{eq:estimI1lambda}
  \begin{split}
  & I_{i,1} 
  \leq L\int_{\Omega^{(1)}_\lambda}
  |\nabla u_{i,\lambda}|^{q_i}\,|w_{i,\lambda}^{+}|^2\,\psi_{\ep}^{p_i}\, dx 
  \leq 2^{q_i}L\int_{\Omega^{(1)}_\lambda}|\nabla u_i|^{q_i}
  \,|w_{i,\lambda}^{+}|^2\,\psi_{\ep}^{p_i}\, dx \\
  &\qquad (\text{since $u_i\in C^{1,\alpha}(\overline{\Omega}_\lambda)$
  and $0\leq\psi_\e\leq 1$}) \\
  & \qquad 
  \leq C\int_{\Omega_\lambda}|w_{i,\lambda}^+|^2\,d x.
  \end{split}  
 \end{equation}
 
 \emph{-\,\,Estimate of $I_{i,2}$.} First of all, since also the integral 
 $I_{i,2}$ is actually performed
 on $A_{i,\lambda}$, we can use again estimate
 \eqref{eq:estimAilambda} and assumption $(h_a)$,
 obtaining
 $$I_{i,2} \leq L\int_{\Omega^{(2)}_\lambda}
 |\nabla u_{i,\lambda}|^{q_i}\,|w_{i,\lambda}^{+}|^2\,\psi_{\ep}^{p_i}\, dx \leq L\int_{\Omega^{(2)}_\lambda}
 (|\nabla u_i| +| \nabla u_{i,\lambda}|)^{q_i}\,|w_{i,\lambda}^{+}|^2\,\psi_{\ep}^{p_i}\, dx
 =: (\star).$$
 From this, since $q_i<p_i$, using \eqref{auxiliar} and by the 
 Young's inequality we obtain
 \begin{equation} \label{eq:estimI2lambda}
  \begin{split}
  & (\star)
  = L\,\int_{\Omega^{(2)}_\lambda}
 \big[(|\nabla u_i| +| \nabla u_{i,\lambda}|)^{q_i}\,\psi_\e^{q_i}\big]\cdot
 \big[|w_{i,\lambda}^{+}|^2\,\psi_{\ep}^{p_i-q_i}\big]\, dx \\
 & \qquad
 \leq C\delta\int_{\Omega^{(2)}_\lambda} (|\nabla u_i| +| \nabla u_{i,\lambda}|)^{p_i}\,\psi_\e^{p_i}\,d x
 + \frac{C}{\delta}\int_{\Omega^{(2)}_\lambda}
 |w_{i,\lambda}^+|^{\frac{2p_i}{p_i-q_i}}\,\psi_{\e}^{p_i}\,d x \\[0.1cm]
 &\qquad (\text{since
 $0 \leq \psi_\e\leq 1$ and $p_i/(p_i-q_i) > 1$}) \\
 & \qquad 
 \leq C\delta\int_{\Omega_\lambda} (|\nabla u_i| +| \nabla u_{i,\lambda}|)^{p_i-2}|\nabla w_{i,\lambda}^+|^2\,\psi_\e^{p_i}\,d x
 +\frac{C}{\delta}
 \int_{\Omega_\lambda}
 |w_{i,\lambda}^+|^{2}\,d x
 \qquad (\text{for every $\delta > 0$}).
  \end{split}
 \end{equation}  
 
 \emph{-\,\,Estimate of $J_{i,1}$.} We first observe that, since  $0\leq u_i\leq M$
 pointwise  in $\overline{\Omega}_\lambda$, by exploiting as\-sump\-tion $(h_a)$
 and the Mean Value theorem, we obtain the following estimate:
 \begin{align*}
 J_{i,1} & \leq
 C\int_{\Omega^{(1)}_\lambda}
 \big||\nabla u_i|^{q_i} - |\nabla u_{i,\lambda}|^{q_i}\big|
     w_{i,\lambda}^+\,\psi_{\ep}^{p_i}\, dx\\
 &  \leq C\,\int_{\Omega^{(1)}_\lambda}
 \big(|\nabla u_i| + |\nabla u_{i,\lambda}|\big)^{q_i-1} \,|\nabla w_{i,\lambda}^+|\, w_{i,\lambda}^+\,\psi_{\ep}^{p_i}\, dx \\
& \leq C\int_{\Omega^{(1)}_\lambda}\big(|\nabla u_i|
 + |\nabla u_{i,\lambda}|\big)^{\frac{2q_i-p_i}{2}}\big(|\nabla u_i|
 + |\nabla u_{i,\lambda}|\big)^{\frac{p_i-2}{2}}\,
     |\nabla w_{i,\lambda}^+|\,w_{i,\lambda}^+\,\psi_{\ep}^{p_i}\, dx\\
    & \leq C\delta\int_{\Omega_\lambda^{(1)}}\big(|\nabla u_i|
 + |\nabla u_{i,\lambda}|\big)^{2q_i-p_i} (|\nabla u_i| +| \nabla u_{i,\lambda}|)^{p_i-2}|\nabla w_{i,\lambda}^+|^2\,\psi_\e^{p_i}\,d x
 +\frac{C}{\delta}
 \int_{\Omega_\lambda}
 |w_{i,\lambda}^+|^{2}\,d x.
 \end{align*}
 Finally,  using \eqref{auxiliar0}, since    
  $q_i \geq\max\{p_i-1,1\}$ and therefore $q_i\geq p_i/2$,  we conclude that
 \begin{equation} \label{eq:estimJi1}
J_{i,1} \leq C\delta
 \int_{\Omega_\lambda}\big(|\nabla u_i|
 + |\nabla u_{i,\lambda}|\big)^{p_i-2}\,|\nabla w_{i,\lambda}^+|^2\,\psi_\e^{p_i}\,dx
 +\frac{C}{\delta}\int_{\Omega_\lambda}|w_{i,\lambda}^+|^2\,d x,
\end{equation}
 and this estimate holds for every choice of $\delta > 0$.
 \medskip
 
 \emph{-\,\,Estimate of $J_{i,2}$.} Using once again the fact that
 $0\leq u_i\leq M$ in $\overline{\Omega}_\lambda$, and taking into
 account assumption $(h_a)$,  we get
 \begin{align*}
 J_{i,2}
 & \leq C\int_{\Omega^{(2)}_\lambda}\big(|\nabla u_i|+|\nabla u_{i,\lambda}|\big)^{q_i}
  \,w_{i,\lambda}^+\,\psi_\e^{p_i}\,dx 
 =: (\bullet).
 \end{align*}
By Young's inequality and 
 estimate \eqref{auxiliar} we obtain
 \begin{equation} \label{eq:estimJi2}
  \begin{split}
   & (\bullet)
   = C\int_{\Omega^{(2)}_\lambda}\big[\big(|\nabla u_i|+|\nabla u_{i,\lambda}|\big)^{q_i}
  \,\psi_\e^{q_i}\big]\cdot\big[w_{i,\lambda}^+\,\psi_\e^{p_i-q_i}\big]\,dx
  \\ 
  & \qquad 
  \leq C\delta\int_{\Omega^{(2)}_\lambda}\big(|\nabla u_i|+|\nabla u_{i,\lambda}|\big)^{p_i}
  \,\psi_\e^{p_i}\,dx +\frac{C}{\delta}
  \int_{\Omega^{(2)}_\lambda}
  |w_{i,\lambda}^+|^{\frac{p_i}{p_i-q_i}}\,\psi_\e^{p_i}\,d  x \\
  & \qquad \leq C\delta\int_{\Omega_\lambda^{(2)}} \big(|\nabla u_i|+|\nabla u_{i,\lambda}|\big)^{p_i-2} |\nabla w_{i,\lambda}^+|^2
  \,\psi_\e^{p_i}\,dx +\frac{C}{\delta}\int_{\Omega_\lambda}
  |w_{i,\lambda}^+|^{2}\,d  x\\
  & \qquad  \leq C\delta
  \int_{\Omega_\lambda}\big(|\nabla u_i|
  + |\nabla u_{i,\lambda}|\big)^{p_i-2}\,|\nabla w_{i,\lambda}^+|^2\,\psi_\e^{p_i}\,dx
  +\frac{C}{\delta}\int_{\Omega_\lambda}|w_{i,\lambda}^+|^2\,d x,
  \end{split}
 \end{equation}
where we used the fact ${p_i}/{(p_i-q_i)}\geq 2$ (because $q_i\geq p_i/2$) and $0\leq u_i\leq M$ in $\overline{\Omega}_\lambda$; this estimate holds for every $\delta > 0$. \medskip
  
 \emph{-\,\,Estimate of $F_i$.}
 Since 
 $0 \leq \psi_\e\leq 1$, by Young's inequality we immediately get
 \begin{equation} \label{eq:estimFi}
 \begin{split}
  F_i\leq \sum_{j = 1}^m L_j\bigg(\int_{\Omega_\lambda}|w_{i,\lambda}^+|^2\,d x
  +
  \int_{\Omega_\lambda}|w_{j,\lambda}^+|^2\,d x\bigg)
  \leq C\sum_{j = 1}^m\int_{\Omega_\lambda}|w_{j,\lambda}^+|^2\,d x.
  \end{split}
 \end{equation}
 Thanks to all the above estimates, we can finally complete
 the proof of \eqref{eq.lemmaSumm}: in fact, by gathering together
 \eqref{eq:estimPEMS}-to-\eqref{eq:estimFi}, 
 from \eqref{eq.tostartfrom} we infer that
 \begin{equation}
  \begin{split}
  C_{1} \int_{\Omega_{\lambda}} 
     \big(|\nabla u_i| + |\nabla u_{i,\lambda}|\big)^{p_i -2}
     |\nabla w_{i,\lambda}^{+}|^2\,\psi_{\ep}^{p_i}\, dx 
     &  \leq C\delta 
   \int_{\Omega_{\lambda}}\big(|\nabla u_i| 
   + |\nabla u_{i,\lambda}|\big)^{p_i -2} |\nabla w_{i,\lambda}^{+}|^2 \psi_{\ep}^{p_i}\,dx   \\
  &  \quad +C_\delta\e^{1/p_i}
  + \frac{C}{\delta}
 \int_{\Omega_\lambda}
 |w_{i,\lambda}^+|^{2}\,d x
 \\
 & \quad + C\sum_{j = 1}^m\int_{\Omega_\lambda}|w_{j,\lambda}^+|^2\,d x,
  \end{split}
 \end{equation}
 and this estimates holds for every $\delta > 0$.
 As a consequence, if we choose $\delta$ sufficiently small
 and if we let $\varepsilon\to 0^+$ with the aid of Fatou's lemma, we obtain
 \begin{equation} \label{eq:tousePoincar}
  \int_{\Omega_{\lambda}}\big(|\nabla u_i| 
   + |\nabla u_{i,\lambda}|\big)^{p_i -2} |\nabla w_{i,\lambda}^{+}|^2\,dx    \leq C\sum_{j = 1}^m\int_{\Omega_\lambda}|w_{j,\lambda}^+|^2\,d x,
  \end{equation}
  where $C > 0$ is a suitable constant depending on $p_i, q_i, a_i, f_i, \lambda,\Omega,M$.
  This, together with the estimate in 
  \eqref{eq:estimwplusM}, immediately implies
  the desired \eqref{eq.lemmaSumm}.
  \medskip

  \noindent {\textbf{Case (ii)}.} Let us define the function
  $$A_i(t)= \int_0^t a_i(s) \, ds.$$
   Using the family of \emph{cut-off} function $\{\psi_\varepsilon\}_{\varepsilon < \e_0}$ defined in the previous case,  we now define in a very similar way, for every $\varepsilon \in (0,\e_0)$,
  the maps
  $$\varphi_{i,\varepsilon}^{(1)}(x):= 
  \begin{cases}
  	e^{-A_i(u_i(x))}w_{i,\lambda}^+(x)\,\psi_\varepsilon^{p_i}(x),
  	& \text{if $x\in\Omega_\lambda$}, \\
  	0, & \text{otherwise},
  \end{cases}$$
and 
$$\varphi_{i,\varepsilon}^{(2)}(x):= 
\begin{cases}
e^{-A_i(u_{i,\lambda}(x))}w_{i,\lambda}^+(x)\,\psi_\varepsilon^{p_i}(x),
& \text{if $x\in\Omega_\lambda$}, \\
0, & \text{otherwise}.
\end{cases}$$
It is possible to prove that, see e.g. \cite{EMM},
\begin{itemize}
	\item[(i)] $\varphi^{(1)}_{i,\varepsilon}, \varphi^{(2)}_{i,\varepsilon} 
	\in \mathrm{Lip}(\R^N);$
	
	\item[(ii)] $\operatorname{supp}(\varphi^{(1)}_{i,\varepsilon}) \subseteq \Omega_\lambda$, $\operatorname{supp}(\varphi^{(2)}_{i,\varepsilon}) \subseteq \Omega_\lambda$ and $\varphi^{(1)}_{i, \varepsilon} \equiv \varphi^{(2)}_{i,\varepsilon} \equiv 0$ near $\Gamma_\lambda$.
\end{itemize}
Hence, taking into account properties (i)-(ii) of $\varphi^{(1)}_{i,\ep}$ and $\varphi^{(2)}_{i,\ep}$,
a standard density argument allows us to use
$\varphi^{(1)}_{i,\ep}$ and $\varphi^{(2)}_{i,\ep}$ as a test functions respectively in \eqref{eq:weaksoldef} and \eqref{eq.PDEulambda}. We then subtract the latter to the former getting
  \begin{equation*}
	\begin{split}
		&
		\int_{\Omega_\lambda}
		\langle |\nabla u_i|^{p_i-2} \nabla u_i , \nabla \varphi_{i, \varepsilon}^{(1)} \rangle \, dx -\int_{\Omega_\lambda}  \langle |\nabla u_{i,\lambda}|^{p_i-2}\nabla u_{i,\lambda},\nabla
		\varphi_{i,\ep}^{(2)}\rangle\, dx \\
		&
		\qquad\qquad + \int_{\Omega_{\lambda}} a_i(u_i)|\nabla u_i|^{p_i} \varphi_{i, \varepsilon}^{(1)} \, dx- \int_{\Omega_\lambda} a_i(u_{i,\lambda})
		|\nabla u_{i,\lambda}|^{p_i} \varphi_{i,\ep}^{(2)}\, dx \\
		& \quad
		= \int_{\Omega_\lambda} f_i(\mathbf{u}) \varphi_{i,\ep}^{(1)}\, dx- \int_{\Omega_\lambda} f_i(\mathbf{u}_\lambda) \varphi_{i,\ep}^{(2)}\, dx.
	\end{split}
\end{equation*}
Now, we use the explicit expression of both $\varphi_{i, \varepsilon}^{(1)}$ and $\varphi_{i, \varepsilon}^{(2)}$ to get
\begin{equation*}
\begin{split}
	&
	-\int_{\Omega_\lambda} a_i(u_i)  e^{-A_i(u_i)} |\nabla u_i|^{p_i} w_{i, \lambda}^+ \psi_\ep^{p_i} \, dx + \int_{\Omega_\lambda} e^{-A_i(u_i)} |\nabla u_i|^{p_i-2} \langle \nabla u_i, \nabla w_{i,\lambda}^+	\rangle \psi_\varepsilon^{p_i} \,dx \\
	& \quad 
	+ p_i\int_{\Omega_\lambda} e^{-A_i(u_i)} |\nabla u_i|^{p_i-2} \langle \nabla u_i, \nabla \psi_\ep \rangle w_{i,\lambda}^+	 \psi_\varepsilon^{p_i-1} \,dx \\
	&+\int_{\Omega_\lambda}  a_i(u_{i,\lambda}) e^{-A_i(u_{i, \lambda})} |\nabla u_{i, \lambda}|^{p_i} w_{i, \lambda}^+ \psi_\ep^{p_i} \, dx
	-\int_{\Omega_\lambda} e^{-A_i(u_{i, \lambda})} |\nabla u_{i,\lambda}|^{p_i-2} \langle \nabla u_{i,\lambda},\nabla
	w_{i,\lambda}^{+}\rangle \psi_{\ep}^{p_i}\, dx \\
	& \quad - p_i \int_{\Omega_{\lambda}}e^{-A_i(u_{i, \lambda})}|\nabla u_{i,\lambda}|^{p_i-2} \langle \nabla u_{i,\lambda},\nabla
	\psi_{\ep}\rangle  w_{i,\lambda}^{+} \psi_{\ep}^{p_i -1}\, dx
	\\
	&+ \int_{\Omega_{\lambda}} a_i(u_i)|\nabla u_i|^{p_i} e^{-A_i(u_i)}w_{i,\lambda}^{+} \psi_{\ep}^{p_i} \, dx- \int_{\Omega_\lambda} a_i(u_{i,\lambda})
	|\nabla u_{i,\lambda}|^{p_i} e^{-A_i(u_{i,\lambda})}w_{i,\lambda}^{+} \psi_{\ep}^{p_i}\, dx \\
	= &\int_{\Omega_\lambda} f_i(\mathbf{u}) e^{-A_i(u_i)}w_{i,\lambda}^{+} \psi_{\ep}^{p_i}\, dx- \int_{\Omega_\lambda} f_i(\mathbf{u}_\lambda) e^{-A_i(u_{i,\lambda})}w_{i,\lambda}^{+} \psi_{\ep}^{p_i}\, dx.
\end{split}
\end{equation*}
After a simplification, we add on both sides the term
$$\int_{\Omega_{\lambda}}e^{-A_i(u_{i,\lambda})} |\nabla u_i|^{p_i -2}\langle\nabla u_i, \nabla w_{i,\lambda}^{+}\rangle \psi_{\ep}^{p_i}\, dx,$$
\noindent and, in the left hand side, we add and subtract the term 
$$p_i \int_{\Omega_{\lambda}}e^{-A_i(u_{i,\lambda})}|\nabla u_i|^{p_i -2} \langle \nabla u_i, \nabla \psi_{\ep}\rangle w_{i,\lambda}^{+} \psi_{\ep}^{p_i -1} \, dx.$$
\noindent Rearranging the terms, we find
\begin{equation}
\begin{aligned}
&\int_{\Omega_{\lambda}}e^{-A_i(u_{i,\lambda})} \langle |\nabla u_{i}|^{p_i -2} \nabla u_i - |\nabla u_{i,\lambda}|^{p_i -2} \nabla u_{i}, \nabla w_{i,\lambda}^+ \rangle \psi_{\ep}^{p_i}\, dx\\
&= \int_{\Omega_{\lambda}}\left(e^{-A_{i}(u_{i,\lambda})}- e^{-A_{i}(u_{i})}\right) |\nabla u_i|^{p_i -2} \langle \nabla u_i, \nabla w_{i,\lambda}^{+}\rangle \psi_{\ep}^{p_i}\, dx \\
& \quad + p_i \, \int_{\Omega_{\lambda}}\left(e^{-A_{i}(u_{i,\lambda})}- e^{-A_{i}(u_{i})}\right)|\nabla u_i|^{p_i -2} \langle \nabla u_i, \nabla \psi_{\ep}\rangle w_{i,\lambda}^{+}\psi_{\ep}^{p_i -1}\, dx\\
&\quad +p_i \, \int_{\Omega_{\lambda}}e^{-A_i (u_{i,\lambda})}\langle |\nabla u_{i,\lambda}|^{p_i -2}\nabla u_{i,\lambda} - |\nabla u_i|^{p_i -2}\nabla u_i ,\nabla \psi_{\ep}\rangle w_{i,\lambda}^{+}\psi_{\ep}^{p_i -1} \, dx\\
&\quad + \int_{\Omega_{\lambda}} \left(  e^{-A_i(u_i)} f_i(\mathbf{u}) - e^{-A_i(u_{i,\lambda})}f_i(\mathbf{u_{\lambda}}) \right) w_{i,\lambda}^{+}\psi_{\ep}^{p_i}  \, dx.
\end{aligned}
\end{equation}
Arguing similarly to \textbf{Case (i)}, see \eqref{eq.NewBelowEstim}, the left hand side can be estimated from below with
\begin{equation}\label{eq.Case2EstBelow}
C\int_{\Omega_{\lambda}}\left( |\nabla u_i| + |\nabla u_{i,\lambda}|\right)^{p_i -2} |\nabla w_{i,\lambda}^{+}|^2 \psi_{\ep}^{p_i}\, dx.
\end{equation}
\noindent Indeed, in the set $\Omega_{\lambda}\cap \mathrm{supp}(w_{i,\lambda}^+)$, 
one has $e^{-A_i(u_{i,\lambda})}\geq C>0$.\\
\noindent We now focus on the right hand side which we firstly bound from above passing to the absolute values, getting
\begin{equation*}
\begin{aligned}
&\int_{\Omega_{\lambda}}\left|e^{-A_{i}(u_{i,\lambda})}- e^{-A_{i}(u_{i})}\right| |\nabla u_i|^{p_i -1} |\nabla w_{i,\lambda}^{+}| \psi_{\ep}^{p_i}\, dx \\
&+ p_i \, \int_{\Omega_{\lambda}}\left|e^{-A_{i}(u_{i,\lambda})}- e^{-A_{i}(u_{i})}\right||\nabla u_i|^{p_i -1} |\nabla \psi_{\ep}| w_{i,\lambda}^{+}\psi_{\ep}^{p_i -1}\, dx\\
&+p_i \, \int_{\Omega_{\lambda}}e^{-A_i (u_{i,\lambda})}\left| |\nabla u_{i,\lambda}|^{p_i -2}\nabla u_{i,\lambda} - |\nabla u_i|^{p_i -2}\nabla u_i \right| |\nabla \psi_{\ep}| w_{i,\lambda}^{+}\psi_{\ep}^{p_i -1} \, dx\\
&+\int_{\Omega_{\lambda}} \left(  e^{-A_i(u_i)} f_i(\mathbf{u}) - e^{-A_i(u_{i,\lambda})}f_i(\mathbf{u_{\lambda}}) \right) w_{i,\lambda}^{+}\psi_{\ep}^{p_i}  \, dx
 =: \mathcal{J}_{i,1} + \mathcal{J}_{i,2} + \mathcal{J}_{i,3} + \mathcal{J}_{i,4}.
\end{aligned}
\end{equation*}
For the reader's convenience, we recall that, in this case, we are assuming $2\leq p_i\leq N$. We have the following

 \emph{-\,\,Estimate of $\mathcal{J}_{i,1}$.} Using the lipschitzianity of $t \mapsto e^{-A_i(t)}$, we get that there exists a positive constant $C>0$ such that
 \begin{equation*}
 \begin{aligned}
 \mathcal{J}_{i,1} &\leq C \int_{\Omega_{\lambda}}|\nabla u_i|^{p_i -1}|\nabla w_{i,\lambda}^+| \psi_\ep^{\frac{p_i}{2}} 
 w_{i,\lambda}^+  \psi_\ep^{\frac{p_i}{2}} \, dx.  
 \end{aligned}
 \end{equation*}
 \noindent Now, by exploiting the weighted Young's inequality, we obtain that for every $\delta>0$ it holds the following
 \begin{equation}\label{eq.StimaJi1}
 	\begin{split}
 	\mathcal{J}_{i,1} &\leq C\delta \int_{\Omega_{\lambda}} |\nabla u_i|^{2p_i -2}|\nabla w_{i,\lambda}^{+}|^{2} \psi_{\ep}^{p_i}\, dx + \dfrac{C}{\delta} \int_{\Omega_{\lambda}} (w_{i,\lambda}^+)^{2} \psi_{\ep}^{p_i}\, dx \\
 	&\leq C\delta \int_{\Omega_{\lambda}} (|\nabla u_i|+|\nabla u_{i,\lambda}|)^{p_i -2}|\nabla w_{i,\lambda}^{+}|^{2} \psi_{\ep}^{p_i}\, dx + \dfrac{C}{\delta} \int_{\Omega_{\lambda}} (w_{i,\lambda}^+)^{2} \psi_{\ep}^{p_i}\, dx
 \end{split} 	
 \end{equation}
 where in the last step $C>0$ depends also on $\|\nabla u\|_{L^{\infty}(\Omega_{\lambda})}$.

 \emph{-\,\,Estimate of $\mathcal{J}_{i,2}$.} Using once again the 
 lipschitzianity of $t \mapsto e^{-A_i(t)}$ together with the boundedness of $\nabla u$ in $\Omega_{\lambda}$, and then exploiting the H\"older's inequality with  exponents $(p_i, p_i/(p_i-1))$, we get
 \begin{equation}
 \mathcal{J}_{i,2} \leq C\left( \int_{\Omega_{\lambda}}|\nabla \psi_{\ep}|^{p_i} (w_{i,\lambda}^{+})^{p_i} \, 
 dx \right)^\frac{1}{p_i}\left( \int_{\Omega_{\lambda}}\psi_{\ep}^{p_i} \, dx\right)^\frac{p_i}{p_i-1}.
 \end{equation}
 
  \emph{-\,\,Estimate of $\mathcal{J}_{i,3}$.} We notice first that 
  $e^{-A_i (u_{i,\lambda})} \leq 1$. Arguing similarly  as we did  in the com\-pu\-ta\-tions that led to \eqref{eq:estP1singular} and subsequently to 
  \eqref{eq:estimPEMS},  we deduce:  \begin{equation*}
  \mathcal{J}_{i,3} \leq  C\delta
  \int_{\Omega_{\lambda}}\big(|\nabla u_i| 
  + |\nabla u_{i,\lambda}|\big)^{p_i -2} |\nabla w_{i,\lambda}^{+}|^2 \psi_{\ep}^{p_i}\,dx 
  +C_\delta\e^{1/p_i} \quad(\text{for every $\delta>0$}).
  \end{equation*}
  
   \emph{-\,\,Estimate of $\mathcal{J}_{i,4}$.} We first consider the function 
   $$g_i (t) := e^{-A_i (t_i)} f_i (t), \quad t\in \mathcal{I}^m,$$
   \noindent which is still a $C^1$ function satisfying the cooperativity condition
   $\de_{t_k}g_i \geq 0$ on $\mathcal{I}^m$ for every $k\neq i$.  
   Hence, we can repeat the computations made in \eqref{lipf}  up to \eqref{eq.estimfLipschitz} to get
 \begin{equation}
 \mathcal{J}_{i,4} \leq \sum_{j = 1}^m L_{g_j} \int_{\Omega_\lambda} w_{j,\lambda}^+ \,w_{i,\lambda}^+ \,\psi_\ep^{p_i}\, dx \leq  C\sum_{j = 1}^m\int_{\Omega_\lambda}|w_{j,\lambda}^+|^2\,d x.
 \end{equation}
Putting everything together, we can conclude as in the previous \textbf{Case (i)}. Hence, we obtain the desired \eqref{eq.lemmaSumm} for every $i=1,\ldots, m$.
\end{proof}
We conclude this section by proving the following useful lemma.

\begin{lem}\label{lem:SobolevSplitting}
Let assumptions $(h_\Omega)$-to-$(h_f)$ be in force. 	Let $i \in\{1,\ldots,m\}$ be fixed, $1<p_i <2$, and let $\lambda\in(\varrho,0)$.
	Then, there exists a constant $\mathbf{c} = \mathbf{c}_i > 0$ such that
	\begin{equation}\label{eq:lemThesis}
		\int_{\Omega_\lambda} |\nabla w_{i,\lambda}^+|^{p_i} \, dx \leq  \mathbf{c} \ \left(\int_{\Omega_\lambda} (|\nabla u_i| + |\nabla u_{i,\lambda}|)^{p_i-2}| \nabla w_{i,\lambda}^+|^2 \, dx\right)^{\frac{p_i}{2}}.
	\end{equation}
\end{lem}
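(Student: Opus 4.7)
The approach is a direct application of H\"older's inequality combined with an ad-hoc algebraic splitting of $|\nabla w_{i,\lambda}^+|^{p_i}$. The key observation is that, in the singular regime $1<p_i<2$, the exponents $\tfrac{2}{p_i}$ and $\tfrac{2}{2-p_i}$ are both strictly larger than $1$ and satisfy $\tfrac{p_i}{2}+\tfrac{2-p_i}{2}=1$, so they form a conjugate pair.

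First, I would write the pointwise identity
\begin{equation*}
|\nabla w_{i,\lambda}^+|^{p_i} \,=\, \Bigl[(|\nabla u_i|+|\nabla u_{i,\lambda}|)^{p_i-2}\,|\nabla w_{i,\lambda}^+|^2\Bigr]^{p_i/2}\cdot (|\nabla u_i|+|\nabla u_{i,\lambda}|)^{p_i(2-p_i)/2},
\end{equation*}
which is engineered so that raising the first bracket to $2/p_i$ reproduces the integrand on the right-hand side of \eqref{eq:lemThesis}, while raising the second factor to $2/(2-p_i)$ yields $(|\nabla u_i|+|\nabla u_{i,\lambda}|)^{p_i}$. A direct check (the exponents of $(|\nabla u_i|+|\nabla u_{i,\lambda}|)$ sum to zero) confirms the identity. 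Integrating over $\Omega_\lambda$ and applying H\"older with the conjugate pair above then delivers
\begin{equation*}
\int_{\Omega_\lambda}|\nabla w_{i,\lambda}^+|^{p_i}\,dx \,\leq\, \Bigl(\int_{\Omega_\lambda}(|\nabla u_i|+|\nabla u_{i,\lambda}|)^{p_i-2}|\nabla w_{i,\lambda}^+|^2\,dx\Bigr)^{p_i/2}\Bigl(\int_{\Omega_\lambda}(|\nabla u_i|+|\nabla u_{i,\lambda}|)^{p_i}\,dx\Bigr)^{(2-p_i)/2},
\end{equation*}
which is exactly \eqref{eq:lemThesis} with $\mathbf{c}_i$ set to be the $(2-p_i)/2$-th power of the last integral.

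The only non-trivial point is therefore to justify the finiteness of $\int_{\Omega_\lambda}(|\nabla u_i|+|\nabla u_{i,\lambda}|)^{p_i}\,dx$. The $|\nabla u_i|^{p_i}$ contribution is trivial, because $\lambda<0$ and $\Gamma\subseteq\Omega\cap\Pi_0$ force $\overline{\Omega_\lambda}\cap\Gamma=\varnothing$, and hence $u_i\in C^{1,\alpha}(\overline{\Omega_\lambda})$ and $|\nabla u_i|\in L^\infty(\Omega_\lambda)$. For the reflected term, the change of variables $y=R_\lambda(x)$ (an isometry, hence measure-preserving) gives
\begin{equation*}
\int_{\Omega_\lambda}|\nabla u_{i,\lambda}|^{p_i}\,dx \,=\, \int_{R_\lambda(\Omega_\lambda)}|\nabla u_i(y)|^{p_i}\,dy,
\end{equation*}
and the convexity of $\Omega$ in the $x_1$-direction yields $R_\lambda(\Omega_\lambda)\subseteq\Omega$.

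The main obstacle is that $R_\lambda(\Omega_\lambda)$ may well contain points of $\Gamma$, so one really needs the global a priori regularity $u_i\in W^{1,p_i}(\Omega)$. The plan is to derive this by testing the weak formulation \eqref{eq:weaksoldef} against an approximation of the form $T_k(u_i)\,\psi_\varepsilon^{p_i}$, where $T_k$ is the standard truncation at height $k>0$ and $\{\psi_\varepsilon\}$ is the cut-off family for $\Gamma$ constructed in Section~\ref{sec.preliminaries}: property (4) of $\psi_\varepsilon$ guarantees that the correction terms carrying $\nabla\psi_\varepsilon$ vanish in the limit $\varepsilon\to 0^+$ (by H\"older, invoking the vanishing $p_i$-capacity of $\Gamma$), while the nonlinear lower-order and reaction terms are controlled using assumptions $(h_a)$ and $(h_f)$; a Fatou passage in $\varepsilon\to 0^+$ and then $k\to+\infty$ produces the required bound on $\int_\Omega|\nabla u_i|^{p_i}\,dy$, and the proof is complete.
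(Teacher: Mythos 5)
Your opening Hölder splitting is identical to the paper's, and the reduction to showing that $\mathcal{I}:=\int_{\Omega_\lambda}(|\nabla u_i|+|\nabla u_{i,\lambda}|)^{p_i}\,dx$ is finite is also exactly the right intermediate goal. The divergence, and the fatal gap, is in how you propose to bound $\mathcal{I}$.

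You reduce the bound on $\int_{\Omega_\lambda}|\nabla u_{i,\lambda}|^{p_i}\,dx$ to proving the \emph{global} estimate $\nabla u_i\in L^{p_i}(\Omega)$ by testing the equation against $T_k(u_i)\psi_\varepsilon^{p_i}$. This cannot work, and not merely because of technical obstructions in the test-function computation (for instance, $a_i$ is only \emph{locally} Lipschitz, so the term $a_i(u_i)|\nabla u_i|^{q_i}T_k(u_i)\psi_\varepsilon^{p_i}$ is uncontrolled where $u_i$ is unbounded). The deeper reason is that $\nabla u_i\in L^{p_i}(\Omega)$ is \emph{exactly} the a priori summability that the paper refuses to assume: the Introduction stresses that the solutions under consideration ``possibly do not admit a smooth extension in $\Omega$'' and that ``the gradient of the solutions possibly blows up near the critical set.'' The model example makes the failure quantitative: if $\Gamma=\{x_0\}$ and $u_i$ behaves like the $p_i$-fundamental solution $|x-x_0|^{-(N-p_i)/(p_i-1)}$, then $|\nabla u_i|^{p_i}\sim|x-x_0|^{-p_i(N-1)/(p_i-1)}$, which is integrable near $x_0$ only when $p_i>N$ — outside the range $(h_{p,q})$. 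In the regime $1<p_i<2\leq N$ of the lemma, $\nabla u_i\notin L^{p_i}(\Omega)$ in general, so the quantity you are trying to prove finite is typically infinite.

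What the paper does instead is precisely engineered to avoid the global $W^{1,p_i}$ estimate. It restricts to $\Omega_\lambda^+=\Omega_\lambda\cap\mathrm{supp}(w_{i,\lambda}^+)$, splits into
$\Omega_\lambda^{(1)}=\{|\nabla u_{i,\lambda}|<2|\nabla u_i|\}$ and
$\Omega_\lambda^{(2)}=\{|\nabla u_{i,\lambda}|\geq 2|\nabla u_i|\}$, and observes that on $\Omega_\lambda^{(1)}$ the sum $|\nabla u_i|+|\nabla u_{i,\lambda}|$ is controlled by $3|\nabla u_i|$, which is bounded because $u_i\in C^{1,\alpha}(\overline{\Omega}_\lambda)$ (here $\lambda<0$ keeps $\overline{\Omega}_\lambda$ away from $\Gamma$), while on $\Omega_\lambda^{(2)}$ one writes
$(|\nabla u_i|+|\nabla u_{i,\lambda}|)^{p_i}=(|\nabla u_i|+|\nabla u_{i,\lambda}|)^{p_i-2}(|\nabla u_i|+|\nabla u_{i,\lambda}|)^{2}$
and uses the elementary inequality $\tfrac12|\nabla u_{i,\lambda}|\leq|\nabla w_{i,\lambda}|\leq\tfrac32|\nabla u_{i,\lambda}|$ (valid on $\Omega_\lambda^{(2)}$) to absorb the factor $(|\nabla u_i|+|\nabla u_{i,\lambda}|)^{2}$ into $|\nabla w_{i,\lambda}^+|^2$. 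This turns the $\Omega_\lambda^{(2)}$ contribution into exactly the weighted energy $\int(|\nabla u_i|+|\nabla u_{i,\lambda}|)^{p_i-2}|\nabla w_{i,\lambda}^+|^2\psi_\varepsilon^{p_i}$, which Lemma~\ref{leaiuto} has already shown to be finite. No integrability of $\nabla u_{i,\lambda}$ near $\Gamma_\lambda$ is ever invoked. You should replace your global regularity step with this splitting and the appeal to Lemma~\ref{leaiuto}; that is the one non-cosmetic idea missing from your argument.
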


\begin{proof}	Let us define the set $\Omega_\lambda^+:=\Omega_\lambda \cap \,\text{supp}(w^+_{i,\lambda})$ and let $\psi_{\ep}$ be the cut-off function defined in \eqref{test1}. 
Using  H\"older's inequality with conjugate exponents $({2}/{(2-p_i)},{2}/{p_i})$,
we obtain
	\begin{equation}\label{eq:SobolevSplitting}
		\begin{split}
		\int_{\Omega_\lambda^+} &|\nabla w_{i,\lambda}^+|^{p_i}\psi_{\ep}^{p_i}\ \, dx \\& = \int_{\Omega_\lambda^+} (|\nabla u_i|+|\nabla u_{i,\lambda}|)^{\frac{p_i(2-p_i)}{2}} (|\nabla u_i|+|\nabla u_{i,\lambda}|)^{\frac{p_i(p_i-2)}{2}}|\nabla w_{i,\lambda}^+|^{p_i} \psi_{\ep}^{p_i}\, dx\\
		& \leq \left(\int_{\Omega_\lambda^+} (|\nabla u_i|+|\nabla u_{i,\lambda}|)^{p_i} \psi_{\ep}^{p_i}\,dx\right)^{\frac{2-p_i}{p_i}} \left(\int_{\Omega_\lambda^+} (|\nabla u_i|+|\nabla u_{i,\lambda}|)^{p_i-2} |\nabla w_{i,\lambda}^+|^2 \psi_{\ep}^{p_i} \, dx\right)^{\frac{p_i}{2}}.
		\end{split}
	\end{equation}
Now we are going to give an estimate to the first term in the right hand side of \eqref{eq:SobolevSplitting}, i.e. the term
$$\mathcal{I}:=\int_{\Omega_\lambda^+} (|\nabla u_i|+|\nabla u_{i,\lambda}|)^{p_i}\psi_{\ep}^{p_i} \,dx.$$
In the same spirit of the previous lemma, we split the set $\Omega_\lambda$ as 
$\Omega_\lambda = \Omega^{(1)}_\lambda\cup\Omega^{(2)}_\lambda,$
where
\begin{align*}
	& \Omega^{(1)}_\lambda = \{x\in\Omega_\lambda^+\setminus \Gamma_\lambda:
	\,|\nabla u_{i,\lambda}(x)|
	< 2|\nabla u_i|\}\qquad\text{and} \\[0.08cm]
	& \qquad \Omega^{(2)}_\lambda = \{
	x\in\Omega_\lambda^+\setminus \Gamma_\lambda:\,|\nabla u_{i,\lambda}(x)|
	\geq 2|\nabla u_i|\}.
\end{align*}
Hence
$$\mathcal{I}=\int_{\Omega_\lambda^{(1)}} (|\nabla u_i|+|\nabla u_{i,\lambda}|)^{p_i} \psi_{\ep}^{p_i}\,dx + \int_{\Omega_\lambda^{(2)}} (|\nabla u_i|+|\nabla u_{i,\lambda}|)^{p_i} 
\psi_{\ep}^{p_i}\,dx =: \mathcal{I}_1+\mathcal{I}_2.$$
\

\textit{- Estimate of $\mathcal{I}_1$.} Since $\psi_{\ep}^{p_i}\leq 1$, using 
\eqref{auxiliar0} we immediately get
\begin{equation}\label{eq:I1lemmata}
	\mathcal{I}_1 \leq C\int_{\Omega_\lambda^{(1)}} |\nabla u_i|^{p_i} \,dx,
\end{equation}

\textit{- Estimate of $\mathcal{I}_2$.} Using \eqref{auxiliar}, we obtain
\begin{equation}\label{eq:I2lemmata}
\begin{split}
\mathcal{I}_2 &= \int_{\Omega_\lambda^{(2)}} (|\nabla u_i|+|\nabla u_{i,\lambda}|)^{p_i-2} (|\nabla u_i|+|\nabla u_{i,\lambda}|)^2 \psi_{\ep}^{p_i}\, dx\\
&\leq C
\int_{\Omega_\lambda^{(2)}} (|\nabla u_i|+|\nabla u_{i,\lambda}|)^{p_i-2} |\nabla u_{i,\lambda}|^2\psi_{\ep}^{p_i} \, dx\\
&\leq C \int_{\Omega_\lambda^{(2)}} (|\nabla u_i|+|\nabla u_{i,\lambda}|)^{p_i-2} |\nabla w_{i,\lambda}^+|^2\psi_{\ep}^{p_i} \, dx
\end{split}
\end{equation}
Collecting \eqref{eq:I1lemmata} and \eqref{eq:I2lemmata} in order to estimate $\mathcal{I}$, we have
$$\mathcal{I} \leq C\bigg(\int_{\Omega_\lambda^{(1)}} |\nabla u_i|^{p_i} \,dx + 
\int_{\Omega_\lambda^{(2)}} (|\nabla u_i|+|\nabla u_{i,\lambda}|)^{p_i-2} 
|\nabla w_{i,\lambda}^+|^2 \psi_{\ep}^{p_i}\, dx\bigg).$$
Now, since each $\nabla u_i \in L^\infty(\Omega_\lambda)$ for every $\lambda <0$ and the second term is bounded by Lemma~\ref{leaiuto}, we deduce that 
\[\mathcal{I} \leq C,\] 
where $C$ is a positive constant that does not depends on $\varepsilon$. Since $\psi_{\ep}^{p_i}\leq 1$, the inequality \eqref{eq:SobolevSplitting} becomes
\begin{equation}\label{eq:SobolevSplitting1}		
\int_{\Omega_\lambda^+} |\nabla w_{i,\lambda}^+|^{p_i}\psi_{\ep}^{p_i}\ \, dx \leq C
 \left(\int_{\Omega_\lambda^+} (|\nabla u_i|+|\nabla u_{i,\lambda}|)^{p_i-2} |\nabla w_{i,\lambda}^+|^2 \, dx\right)^{\frac{p_i}{2}},
	\end{equation}
	Finally by Fatou's lemma from \eqref{eq:SobolevSplitting1}, we get  \eqref{eq:lemThesis}. 
\end{proof}

 \section{Proof of Theorem \ref{thm:symmetry}} \label{sec:proofThm}

 Thanks to all the results established so far, we are ready to give the
 \begin{proof}[Proof of Theorem \ref{thm:symmetry}] 
 Our approach relies on a suitable adaptation of the integral version
 of the moving plane method.
  We consider the set
 $$\Lambda := \big\{\eta \in (\varrho,0):\,\text{$u_i\leq u_{i,\lambda}$
 on $\Omega_\lambda\setminus R_\lambda(\Gamma)$ for all $\lambda\in(\varrho,\eta]$ and for all
 $i = 1,\ldots,m$}\big\},$$
 and we claim that the following facts hold:
 \begin{itemize}
  \item[(a)] $\Lambda\neq \varnothing$;
  \vspace*{0.05cm}
  
  \item[(b)] setting $\lambda_0 := \sup(\Lambda)$, one has $\lambda_0 = 0$.
 \end{itemize}
 \medskip
 
 \noindent \textbf{Proof of (a).} First of all we observe that, since $\Gamma$ is compact
 and contained in $\Omega\cap\Pi_0$,
 it is possible to find a small $\tau_0 > 0$ such that
 $\varrho+\tau_0 < 0$ and
 $$R_\lambda(\Gamma)\cap\overline{\Omega}_\lambda = \varnothing
 \qquad\text{for every $\lambda\in I_0 := (\varrho,\varrho+\tau_0]$}.$$
 In particular, for every $\lambda\in I_0$ we have 
 $\mathbf{u},\,\mathbf{u}_\lambda\in C^{1,\alpha}(\overline{\Omega}_\lambda)$.
 On the other hand, since both $u_i$ and $u_{i,\lambda}$ are non-negative
 (for every $i = 1,\ldots,m$), it is immediate to recognize that
 $$\text{$u_i\leq u_{i,\lambda}$ on $\de\Omega_\lambda$}\qquad
 \text{for every $i = 1,\ldots,m$}.$$
 Now, it is possible to start the moving plane procedure  using \cite[Proposition 2.5]{EMM}. For the 
 reader's convenience we  state such a result adapted to 
 our context. The proof is exactly the same and 
 therefore we skip it. 
 \begin{prop} \label{prop:WCPsmall}
 Let assumptions $(h_a)$ and $(h_f)$ be in force, and suppose that
 $$p_i>1\quad\text{and}\quad q_i \geq  \max\{p_i -1 , 1\}\qquad\text{
 for every $i = 1,\ldots,m$}.$$
 In addition, suppose that
 $\mathbf{u},\,
 \mathbf{u}_\lambda\in C^{1,\alpha}(\overline \Omega_{\varrho+\tau_0})$. Then,
 there exists a number $\delta > 0$, de\-pen\-ding on 
 $m, p_i, q_i, a_i, f_i, \|u_i\|_{L^{\infty}(\Omega_{\varrho+\tau_0})}, \|\nabla u_i\|_{L^{\infty}(\Omega_{\varrho+\tau_0})}$ and $\|\nabla \tilde{u}_i\|_{L^{\infty}(\Omega_{\varrho+\tau_0})}$,
 with the following property: if $\Omega_\lambda \subseteq \Omega_{\varrho+\tau_0}$ 
 is such that
 $|\Omega_\lambda|\leq \delta$, and if 
 $$\text{$u_i \leq {u}_{i,\lambda}$ on $\partial \Omega_\lambda$}\quad
 \text{for every $i=1,\ldots,m$},$$
 then
 $u_i \leq {u}_{i,\lambda}$ in $\Omega_\lambda$ for every $i=1,\ldots,m$.
\end{prop}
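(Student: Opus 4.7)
The plan is to mimic the strategy of Lemma~\ref{leaiuto}, but in the simpler setting where both $\mathbf{u}$ and $\mathbf{u}_\lambda$ are of class $C^{1,\alpha}$ on $\overline{\Omega}_{\varrho+\tau_0}$ (so no cut-off $\psi_\e$ is needed), and where the smallness of $|\Omega_\lambda|$ plays the role that the cut-off plays in Lemma~\ref{leaiuto}. First I would observe that, thanks to the hypothesis $u_i\leq u_{i,\lambda}$ on $\partial\Omega_\lambda$ together with the regularity of both functions, the truncation $w_{i,\lambda}^+=(u_i-u_{i,\lambda})^+$ belongs to $W_0^{1,p_i}(\Omega_\lambda)$ and is therefore an admissible test function in the difference between \eqref{eq:weaksoldef} and \eqref{eq.PDEulambda}. (In the subcase $q_i=p_i$ of Case~(ii) I would instead use $e^{-A_i(u_i)}w_{i,\lambda}^+$ and $e^{-A_i(u_{i,\lambda})}w_{i,\lambda}^+$, exactly as in the second part of the proof of Lemma~\ref{leaiuto}.)

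Next I would repeat verbatim the algebraic manipulations of Lemma~\ref{leaiuto}: the first inequality in \eqref{eq:inequalities} yields a lower bound of the form
\begin{equation*}
C_1\int_{\Omega_\lambda}(|\nabla u_i|+|\nabla u_{i,\lambda}|)^{p_i-2}|\nabla w_{i,\lambda}^+|^2\,dx,
\end{equation*}
while the Lipschitz assumption $(h_a)$, the $C^1$-cooperativity assumption $(h_f)$, and the uniform bound $M$ on $\|u_j\|_\infty$ and $\|\nabla u_j\|_\infty$ on $\overline{\Omega}_{\varrho+\tau_0}$ produce, via weighted Young's inequality and the telescoping identity~\eqref{lipf}, the upper bound
\begin{equation*}
C\delta\int_{\Omega_\lambda}(|\nabla u_i|+|\nabla u_{i,\lambda}|)^{p_i-2}|\nabla w_{i,\lambda}^+|^2\,dx+\frac{C}{\delta}\sum_{j=1}^m\int_{\Omega_\lambda}|w_{j,\lambda}^+|^2\,dx.
\end{equation*}
Choosing $\delta$ small enough to absorb the first term on the left yields the key estimate
\begin{equation*}
\int_{\Omega_\lambda}(|\nabla u_i|+|\nabla u_{i,\lambda}|)^{p_i-2}|\nabla w_{i,\lambda}^+|^2\,dx\leq C\sum_{j=1}^m\int_{\Omega_\lambda}|w_{j,\lambda}^+|^2\,dx.
\end{equation*}

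Now I would convert the weighted gradient integral on the left into an honest $L^{p_i}$-norm of $\nabla w_{i,\lambda}^+$. For $p_i\geq 2$ the third inequality in \eqref{eq:inequalities} gives directly $\int_{\Omega_\lambda}|\nabla w_{i,\lambda}^+|^{p_i}\,dx\leq C\int_{\Omega_\lambda}(|\nabla u_i|+|\nabla u_{i,\lambda}|)^{p_i-2}|\nabla w_{i,\lambda}^+|^2\,dx$, while for $1<p_i<2$ the same conclusion follows from Lemma~\ref{lem:SobolevSplitting}, whose hypotheses are satisfied since $\nabla u_i,\nabla u_{i,\lambda}\in L^\infty(\Omega_\lambda)$. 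Combined with the previous step, this yields $\|\nabla w_{i,\lambda}^+\|_{L^{p_i}(\Omega_\lambda)}^{s_i}\leq C\sum_j\|w_{j,\lambda}^+\|_{L^2(\Omega_\lambda)}^2$ with $s_i=p_i$ when $p_i\geq 2$ and $s_i=2$ otherwise.

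Finally, since $w_{j,\lambda}^+\in W_0^{1,p_j}(\Omega_\lambda)$, a Sobolev-Poincar\'e inequality on the small set $\Omega_\lambda$ gives $\|w_{j,\lambda}^+\|_{L^2(\Omega_\lambda)}\leq C|\Omega_\lambda|^{\alpha_j}\|\nabla w_{j,\lambda}^+\|_{L^{p_j}(\Omega_\lambda)}$ for a suitable $\alpha_j>0$ (treating separately the cases $p_j\geq 2$ via Poincar\'e followed by H\"older, and $p_j<2$ via Sobolev embedding into $L^{p_j^*}$, with $p_j^*\geq 2$ under $({h}_{p,q})$). Summing over $i$ and setting $\Phi(\lambda):=\sum_{i=1}^m\|\nabla w_{i,\lambda}^+\|_{L^{p_i}(\Omega_\lambda)}^{p_i}$, one obtains $\Phi(\lambda)\leq C|\Omega_\lambda|^{\alpha}\,\Phi(\lambda)$ for some $\alpha>0$ and a constant $C$ depending only on the data listed in the statement. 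Choosing $\delta$ so small that $C\delta^\alpha<1$ forces $\Phi(\lambda)=0$, hence $w_{i,\lambda}^+\equiv 0$ for every $i$, which is the desired comparison. The main bookkeeping difficulty—and in my view the only real obstacle—is the heterogeneity of the exponents $p_i$ (especially mixing $p_i<2$ with $p_k\geq 2$ across the coupling terms): one has to keep track of the exponents $\alpha_i$ provided by Sobolev-Poincar\'e on each component and verify that a single smallness threshold $\delta$ closes the whole coupled system simultaneously.
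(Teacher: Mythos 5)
The paper itself gives no proof of this Proposition: it cites \cite[Proposition~2.5]{EMM} and states that the proof is ``exactly the same,'' so there is no in-text argument to compare against. Your overall strategy (test with $w_{i,\lambda}^+$, use \eqref{eq:inequalities} to obtain the weighted gradient bound, absorb via a Poincar\'e-type inequality on the small set) is indeed the standard weak-comparison argument and matches the spirit of Lemma~\ref{leaiuto} and of the proof of Theorem~\ref{thm:symmetry}.

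However, your final absorption step has a genuine gap when some $p_i>2$. From the key estimate you correctly obtain $\|\nabla w_{i,\lambda}^+\|_{L^{p_i}}^{p_i}\leq C\sum_j\|w_{j,\lambda}^+\|_{L^2}^2$ (using the third inequality in \eqref{eq:inequalities}), and separately $\|w_{j,\lambda}^+\|_{L^2}^2\leq C|\Omega_\lambda|^{\alpha_j}\|\nabla w_{j,\lambda}^+\|_{L^{p_j}}^2$. Plugging the latter into the former gives a relation between $\|\nabla w_{i,\lambda}^+\|_{L^{p_i}}^{p_i}$ and $\sum_j\|\nabla w_{j,\lambda}^+\|_{L^{p_j}}^{2}$, and for $p_j>2$ these two quantities scale with different powers of the unknown. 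Your claimed closed-loop inequality $\Phi(\lambda)\leq C|\Omega_\lambda|^\alpha\Phi(\lambda)$ therefore does not follow: for a single equation with $p>2$ the same chain only yields $\|\nabla w^+\|_{L^p}^{p-2}\leq C|\Omega_\lambda|^\alpha$, an a priori bound rather than $w^+\equiv 0$. Note that for $p_i<2$ you do not face this problem, because Lemma~\ref{lem:SobolevSplitting} is precisely the device that converts the $p_i$-power on the left into a quantity with the \emph{same} homogeneity as $\int(|\nabla u_i|+|\nabla u_{i,\lambda}|)^{p_i-2}|\nabla w_{i,\lambda}^+|^2\,dx$; there is no analogous upgrade coming from the third inequality in \eqref{eq:inequalities} when $p_i>2$. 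The fix, and what the paper actually uses in the parallel step of the proof of Theorem~\ref{thm:symmetry} (see \eqref{eq:finalPoincareBis}), is the \emph{weighted} Poincar\'e--Sobolev inequality \cite[Theorem~2.3]{Mon}: for $p_i\geq 2$ it bounds $\int_{\Omega_\lambda}|w_{i,\lambda}^+|^2\,dx$ directly by $C_P(\Omega_\lambda)\int_{\Omega_\lambda}|\nabla u_i|^{p_i-2}|\nabla w_{i,\lambda}^+|^2\,dx$, with $C_P(\Omega_\lambda)\to 0$ as $|\Omega_\lambda|\to 0$, and this makes the whole system linear in $\sum_j\|w_{j,\lambda}^+\|_{L^2}^2$, so a single smallness of $|\Omega_\lambda|$ closes the loop for all components at once.

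A second, smaller point: you invoke $p_j^*\geq 2$ ``under $(h_{p,q})$,'' but $(h_{p,q})$ is not among the hypotheses of Proposition~\ref{prop:WCPsmall} (only $(h_a)$, $(h_f)$, $p_i>1$, $q_i\geq\max\{p_i-1,1\}$, and $C^{1,\alpha}$-regularity up to the boundary are assumed). In the regular setting of this Proposition you can avoid this assumption altogether for $p_i<2$: since the gradients are bounded by $M$ and $p_i-2<0$, the weight satisfies $(|\nabla u_i|+|\nabla u_{i,\lambda}|)^{p_i-2}\geq(2M)^{p_i-2}>0$ pointwise, so the key estimate already controls $\int_{\Omega_\lambda}|\nabla w_{i,\lambda}^+|^2\,dx$ directly and the ordinary $L^2$-Poincar\'e inequality suffices, with no need for Lemma~\ref{lem:SobolevSplitting} or the Sobolev exponent $p_i^*$.
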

Therefore, by possibly shrinking $\tau_0$ in such a way that
 $|\Omega_\lambda|\leq \delta$ for every $\lambda\in I_0$, we derive that
 $u_i\leq u_{i,\lambda}$ in $\Omega_\lambda$
 for every $i = 1,\ldots,m$.
 Hence, $\eta := \varrho+\tau_0\in\Lambda$, and thus
 $$\Lambda\neq \varnothing.$$
 
 \noindent\textbf{Proof of (b).} On account of (a), $\lambda_0$ is well-defined
 and $\lambda_0\leq 0$. Arguing by contradiction, we then suppose that
 $\lambda_0 < 0$, and we prove that there exists some $\tau_0 > 0$ such that
 \begin{equation} \label{eq:contradictionlambdazero}
  \text{$u_{i}\leq u_{i,\lambda}$ on $\Omega_\lambda\setminus R_\lambda(\Gamma)$}
  \quad\text{for all $i = 1,\ldots,m$ and $\lambda\in (\lambda_0,\lambda_0+\tau_0]$}.
 \end{equation}
 Since 
 \eqref{eq:contradictionlambdazero} is clearly in contrast with the
 very definition of $\lambda_0$, we can conclude that
 $$\lambda_0 = 0.$$
 In order to establish the needed \eqref{eq:contradictionlambdazero}, we proceed as follows:
 first of all,
 since both $\mathbf{u}$ and $\mathbf{u}_\lambda$
 are continuous on $\Omega_\lambda\setminus R_\lambda(\Gamma)$, 
  we observe that 
 \begin{equation} \label{eq:segnodeboleWMP}
  \text{$u_i\leq u_{i,\lambda_0}$ on $\Omega_{\lambda_0}\setminus R_{\lambda_0}(\Gamma)$}\qquad
 \text{for every $i = 1,\ldots,m$}.
 \end{equation}
 We then claim that, as a consequence of
 \eqref{eq:segnodeboleWMP}, one actually has
 \begin{equation} \label{eq:toprovewithSMPCaseI} 
  \text{$u_i < u_{i,\lambda_0}$ on $\Omega_{\lambda_0}\setminus R_{\lambda_0}(\Gamma)$}
  \qquad
 \text{for every $i = 1,\ldots,m$}.
 \end{equation}
 To prove \eqref{eq:toprovewithSMPCaseI}, we
 arbitrarily fix $i\in\{1,\ldots,m\}$. 

 We first point out that, since
 $\Omega_{\lambda_0} = \Omega\cap \{x_1<\lambda_0\}$ 
 is connected and the com\-pact set 
 $\Gamma_{\lambda_0} := R_{\lambda_0}(\Gamma)$ has vanishing $p_i$-capacity
 (as the same is true of $\Gamma$, see assumption $(h_\Gamma)$), it is 
 not difficult to check that
 $\Omega_{\lambda_0}\setminus \Gamma_{\lambda_0}$
 is \emph{connected} (see, e.g., \cite[Lemma 2.4]{BVV}); 
 thus, owing to assumptions $(h_f)$, we deduce that it holds the following distributional inequality 
$$-\Delta_{p_i} u_i + a_i(u_i) |\nabla u_i|^{q_i} + \Lambda_i u_i \leq -\Delta_{p_i} u_{i,\lambda_0} + a_i(u_{i,\lambda_0})
  |\nabla u_{i,\lambda_0}|^{q_i} + \Lambda_i u_{i,\lambda_0} \quad\text{in } \Omega_{\lambda_0}
  \setminus\Gamma_{\lambda_0},$$
where $\Lambda_i$ is a positive constant.
 As a con\-se\-quen\-ce, since  $u_i\leq u_{i,\lambda_0}$
 in $\Omega_{\lambda_0}\setminus \Gamma_{\lambda_0}$, we can apply the Strong Comparison 
 Principle (see, e.g., \cite[Theorem 1.2]{Mon}), ensuring 
 that
 \begin{equation} \label{eq:dicSCP}
  \text{either $u_i\equiv u_{i,\lambda_0}$ or $u_i < u_{i,\lambda_0}$ \quad in $\Omega_{\lambda_0}\setminus
 	\Gamma_{\lambda_0}$}.
 \end{equation}
 On the other hand, since $\mathbf{u}$ solves
 \eqref{eq:System}, we have $u_i-u_{i,\lambda_0} = -u_{i,\lambda_0} < 0$ on 
 $\de\Omega_{\lambda_0}\cap\de\Omega$. This
 immediately gives \eqref{eq:toprovewithSMPCaseI}, since the alternative 
 $u_i\equiv u_{i,\lambda_0}$ in $\Omega_{\lambda_0}\setminus \Gamma_{\lambda_0}$
 obviously cannot be achieved  because of the Dirichlet boundary condition and the fact that $u_i>0$ in $\Omega \setminus \Gamma$.
 \medskip

 \noindent Now we have fully established 
 \eqref{eq:toprovewithSMPCaseI}, we can continue with the proof
 of \eqref{eq:contradictionlambdazero}. 
 To begin with, we arbitrarily fix a compact set
 $\mathcal{K}\subseteq 
 \Omega_{\lambda_0}\setminus R_{\lambda_0}(\Gamma)$ and we observe that,
 since $R_{\lambda_0}(\Gamma)$ 
 is compact, we can find $\tau_0 = \tau_0(\mathcal{K}) > 0$ so small that
 $$\mathcal{K}\subseteq \Omega_\lambda\setminus R_\lambda(\Gamma)\qquad
 \text{for every $\lambda_0\leq \lambda\leq \lambda_0+\tau_0$}.$$
 Moreover, since $\mathbf{u}$ is continuous on $\mathcal{K}$, a simple
 uniform-continuity argument based on \eqref{eq:toprovewithSMPCaseI} shows that,
 by possibly shrinking $\tau_0$, we also have (for all $i = 1,\ldots,m$)
 \begin{equation} \label{eq:uilequilambdaK}
  \text{$u_i < u_{i,\lambda}$ on $\mathcal{K}$}
 \qquad\text{for every $\lambda_0\leq \lambda\leq \lambda_0+\tau_0$}.
 \end{equation}
 We now turn to prove that, for every $\lambda_0\leq\lambda\leq\lambda_0+\tau_0$
 and every $i = 1,\ldots,m$, one can find a constant
 $C_i > 0$, depending of $p_i, q_i, a_i, f_i, \lambda,\Omega,M$ (with  $M = M_{\mathbf{u}} := \max_{1\leq j\leq m}
 \big(\|u_j\|_{L^\infty(\overline{\Omega}_{\rho+\tau_0})}+
 \|\nabla u_j\|_{L^\infty(\overline{\Omega}_{\rho+\tau_0})}\big)
 < +\infty$),
 such that
 \begin{equation} \label{eq:integralestimatesperPoincare}
  \int_{\Omega_{\lambda}\setminus\mathcal{K}}\big(|\nabla u_i| 
   + |\nabla u_{i,\lambda}|\big)^{p_i -2} |\nabla w_{i,\lambda}^{+}|^2\,dx 
   \leq C\sum_{j = 1}^m\int_{\Omega_{\lambda}\setminus\mathcal{K}}|w_{j,\lambda}^+|^2\,d x.
 \end{equation}
 Taking \eqref{eq:integralestimatesperPoincare} for granted for a moment,
 let us show how this integral estimate can be used
 in order to prove \eqref{eq:contradictionlambdazero}.
 First of all, by taking the sum in \eqref{eq:integralestimatesperPoincare} 
 for $i=1,\ldots,m$, we get
 \begin{equation}\label{eq:nnsannochisonoio}
 \sum_{i = 1}^m\int_{\Omega_{\lambda}\setminus\mathcal{K}}\big(|\nabla u_i| 
   + |\nabla u_{i,\lambda}|\big)^{p_i -2} |\nabla w_{i,\lambda}^{+}|^2\,dx 
   \leq C'\sum_{i = 1}^m\int_{\Omega_{\lambda}\setminus\mathcal{K}}|w_{i,\lambda}^+|^2\,d x.
 \end{equation}  
Now, we have to distinguish the singular case from the degenerate one. To this end, let us
suppose (up to a rearrangement of the sum in both sides of  the inequality  \eqref{eq:nnsannochisonoio})
that
$$\frac{(2N+2)}{(N+2)}<p_i \leq 2\quad\text{for every $i = 1,\ldots,m'$},$$
for some $1\leq m'\leq m$.
In this case,  we have that  $p_i^* > 2$. Applying  
the H\"older inequality with conjugate exponents $((p^*_i-2)/p^*_i,p_i^*/2)$ and 
the Sobolev inequality to the first $m'$ terms of the right hand side of 
\eqref{eq:nnsannochisonoio}, we  deduce that
\begin{equation}\label{eq:finalPoincare}
\begin{split}
&\sum_{i = 1}^{m'}\int_{\Omega_{\lambda}\setminus\mathcal{K}} \big(|\nabla u_i|  + |\nabla u_{i,\lambda}|\big)^{p_i -2} |\nabla w_{i,\lambda}^{+}|^2\,dx + \sum_{i = m'+1}^m\int_{\Omega_{\lambda}\setminus\mathcal{K}}\big(|\nabla u_i|  + |\nabla u_{i,\lambda}|\big)^{p_i -2} |\nabla w_{i,\lambda}^{+}|^2\,dx  \\  
&\leq C\sum_{i = 1}^{m'}\int_{\Omega_{\lambda}\setminus\mathcal{K}}|w_{i,\lambda}^+|^2\,d x + 
 C\sum_{i = m'+1}^m\int_{\Omega_{\lambda}\setminus\mathcal{K}}|w_{i,\lambda}^+|^2\,d x\\
& \leq  C\sum_{i = 1}^{m'} |\Omega_{\lambda}\setminus\mathcal{K}|^{\frac{p^*_i-2}{p^*_i}}
\left(\int_{\Omega_{\lambda}\setminus\mathcal{K}}|w_{i,\lambda}^+|^{p^*_i}\,d x\right)^{\frac{2}{p^*_i}} + C\sum_{i = m'+1}^m\int_{\Omega_{\lambda}\setminus\mathcal{K}}|w_{i,\lambda}^+|^2\,d x\\
& \leq  C\sum_{i = 1}^{m'} |\Omega_{\lambda}\setminus\mathcal{K}|^{\frac{p^*_i-2}{p^*_i}}
\left(\int_{\Omega_{\lambda}\setminus\mathcal{K}}|\nabla w_{i,\lambda}^+|^{p_i}\,d x\right)^{\frac{2}{p_i}} + C\sum_{i = m'+1}^m\int_{\Omega_{\lambda}\setminus\mathcal{K}}|w_{i,\lambda}^+|^2\,d x.
\end{split}
\end{equation}
In the first $m'$ terms of the right hand side of \eqref{eq:finalPoincare} we apply Lemma \ref{lem:SobolevSplitting},  
while in the last terms (from $m'+1$ to $m$) of the same inequality, since 
$p_i \geq 2$,  we  do apply the weighted Sobolev inequality  \cite[Theorem 2.3]{Mon}. Hence we obtain
\begin{equation}\label{eq:finalPoincareBis}
\begin{split}
&	\sum_{i = 1}^{m'}\int_{\Omega_{\lambda}\setminus\mathcal{K}}\big(|\nabla u_i|  + |\nabla u_{i,\lambda}|
\big)^{p_i -2} |\nabla w_{i,\lambda}^{+}|^2\,dx + 
\sum_{i = m'+1}^m\int_{\Omega_{\lambda}\setminus\mathcal{K}}\big(|\nabla u_i|  + |\nabla u_{i,\lambda}
\big)^{p_i -2} |\nabla w_{i,\lambda}^{+}|^2\,dx  \\
& \qquad\leq C\sum_{i = 1}^{m'}|\Omega_{\lambda}\setminus\mathcal{K}|^{\frac{p^*_i-2}{p^*_i}}\int_{\Omega_{\lambda}\setminus\mathcal{K}}\big(|\nabla u_i|  + |\nabla u_{i,\lambda}|\big)^{p_i -2} |\nabla w_{i,\lambda}^{+}|^2\,dx \\
& \qquad\quad\quad + C\sum_{i = m'+1}^m C_{P}(\Omega_{\lambda}\setminus\mathcal{K}) \int_{\Omega_{\lambda}\setminus\mathcal{K}}|\nabla u_i|^{p_i -2} |\nabla w_{i,\lambda}^{+}|^2\,dx\\
& \qquad\leq C\sum_{i = 1}^{m'} |\Omega_{\lambda}\setminus\mathcal{K}|^{\frac{p^*_i-2}{p^*_i}}\int_{\Omega_{\lambda}\setminus\mathcal{K}}\big(|\nabla u_i|  + |\nabla u_{i,\lambda}|\big)^{p_i -2} |\nabla w_{i,\lambda}^{+}|^2\,dx \\
& \qquad\quad\quad + C\sum_{i = m'+1}^m C_{P}(\Omega_{\lambda}\setminus\mathcal{K}) \int_{\Omega_{\lambda}\setminus\mathcal{K}}\big(|\nabla u_i|  + |\nabla u_{i,\lambda}|\big)^{p_i -2} |\nabla w_{i,\lambda}^{+}|^2\,dx,
\end{split}
\end{equation}
where $C_{P}(\Omega_{\lambda}\setminus\mathcal{K})$ is  the Poincar\'e constant that tends to zero, when the Lebesgue measure $|\Omega_{\lambda}\setminus\mathcal{K}|$ tends to zero. From \eqref{eq:finalPoincareBis}, up to redefine constants we have 
\begin{equation}\label{eq:finalPoincareBisBis}
\begin{split}
&	\sum_{i = 1}^{m}\int_{\Omega_{\lambda}\setminus\mathcal{K}}\big(|\nabla u_i|  + |\nabla u_{i,\lambda}|\big)^{p_i -2} |\nabla w_{i,\lambda}^{+}|^2\,dx   \\
&\qquad\leq C(\Omega_{\lambda}\setminus\mathcal{K})\sum_{i = 1}^m \int_{\Omega_{\lambda}\setminus\mathcal{K}}\big(|\nabla u_i|  + |\nabla u_{i,\lambda}|\big)^{p_i -2} |\nabla w_{i,\lambda}^{+}|^2\,dx,
\end{split}
\end{equation}
where $C(\Omega_{\lambda}\setminus\mathcal{K})$
tends to zero, when the Lebesgue measure $|\Omega_{\lambda}\setminus\mathcal{K}|$ tends to zero. Now,  we 
choose the compact set $\mathcal{K}$ sufficiently large such that
$$C(\Omega_{\lambda}\setminus\mathcal{K})< 1.$$
From this fact we immediately deduce that $u_i \leq u_{i,\lambda}$ for each $\lambda_0 < \lambda \leq 
\lambda_0+\tau$ and this gives a contradiction with the  definition of $\lambda_0$. 
Therefore $\lambda_0 
=0$ and the thesis (i) is proved. Since the moving plane procedure can be performed in the same way
but in the opposite direction, then this proves the desired symmetry
result. To prove the  thesis (ii), we observe that  the monotonicity of the solution  is in fact  implicit in the moving plane method and in particular we get that $\partial_{x_1} u_i \geq 0$  in  $\Omega_0$.  To get \eqref{eq:ehvabe} it is sufficient to apply the Strong Maximum Principle  
\cite[Theorem 2.3]{EMM}.
\

Hence, we are left to prove \eqref{eq:integralestimatesperPoincare}.
To this end, for every \emph{fixed} $\lambda\in(\lambda_0,\lambda_0+\tau_0]$ we choose an
open neighborhood $\mathcal{O}_\lambda\subseteq\Omega_\lambda\setminus\mathcal{K}$
of $\Gamma_\lambda = R_\lambda(\Gamma)$, and a \emph{cut-off family} $\{\psi_\e\}_{\e<\e_0}$  for $\Gamma_\lambda$ related with   $\mathcal{O}_\lambda$. This means, precisely, that
\begin{enumerate}
  \item $\psi_\varepsilon\in \mathrm{Lip}(\R^N)$ and $0\leq \psi_\e\leq 1$ pointwise in $\R^N$;
  \item there exists
  an open neighborhood $\mathcal{V}^{\lambda}_\e\subseteq\mathcal{O}_\lambda$ of $\Gamma_\lambda$ 
  such that
  $$\text{$\psi_\e\equiv 0$ on $\mathcal{V}^{\lambda}_\e$};
  $$
  \item $\psi_\e(x)\to 1$ as $\e\to 0^+$ for a.e.\,$x\in\R^N$;
  \item there exists a constant $C_0 > 0$, independent of $\e$, such that
  $$\int_{\R^N}|\nabla \psi_\e|^{p_i}\,d x \leq C_0\varepsilon.$$
 \end{enumerate}
 We also fix $i \in \{1,\ldots,m\}$,
 and we distinguish two cases.
\begin{itemize}
   \item[(i)] $q_i < p_i$;
   \vspace*{0.1cm}
   \item[(ii)]  $q_i = p_i$.
  \end{itemize}
  \vspace*{0.1cm}
  
  \noindent \textbf{Case (i)}. In this case, for every $\varepsilon \in (0,\e_0)$ we consider
  the map
  $$\varphi_{i,\varepsilon}(x):= 
  \begin{cases}
  	w_{i,\lambda}^+(x)\,\psi_\varepsilon^{p_i}(x) = (u_i-u_{i,\lambda})^+(x)\,\psi_\varepsilon^{p_i}(x),
  	& \text{if $x\in\Omega_\lambda$}, \\
  	0, & \text{otherwise}.
  \end{cases}$$
 As already recognized in Lemma \ref{leaiuto}, $\varphi_{i,\e}$ satisfies the following properties:
 \begin{itemize}
  \item[(i)] $\varphi_{i,\varepsilon} \in \mathrm{Lip}(\R^N);$
  \item[(ii)] $\operatorname{supp}(\varphi_{i, \varepsilon}) \subseteq \Omega_\lambda$ and 
  $\varphi_{i, \varepsilon} \equiv 0$ near $\Gamma_\lambda$.
 \end{itemize}
 Moreover, since we know that $u_i < u_{i,\lambda}$ on $\mathcal{K}$, we also have
 \begin{equation} \label{eq:wilambdaequivzeroK}
  w_{i,\lambda}^+ =(u_i-u_{i,\lambda})^+\equiv 0\quad\text{on $\mathcal{K}$}.
  \end{equation}
 As a consequence, a standard density argument
 allows us to use
  $\varphi_{i,\ep}$ as a test function \emph{both in \eqref{eq:weaksoldef} and
  \eqref{eq.PDEulambda}}, from \eqref{eq:wilambdaequivzeroK} we obtain
  \begin{equation*}
	\begin{split}
	&
     \int_{\Omega_\lambda\setminus\mathcal{K}}
     \langle |\nabla u_i|^{p_i-2} \nabla u_i - |\nabla u_{i,\lambda}|^{p_i-2}\nabla u_{i,\lambda},\nabla
     \varphi_{i,\ep}\rangle\, dx \\
     &
     \qquad\qquad + \int_{\Omega_\lambda\setminus\mathcal{K}}
     \big(a_i(u_i)|\nabla u_i|^{q_i} - a_i(u_{i,\lambda})
     |\nabla u_{i,\lambda}|^{q_i}\big)\varphi_{i,\ep}\, dx \\
     & \quad
     = \int_{\Omega_\lambda\setminus\mathcal{K}}(f_i(\mathbf{u})-f_i(\mathbf{u}_\lambda))
     \varphi_{i,\ep}\, dx.
     \end{split}
  \end{equation*}
  Starting from this identity,
  and proceeding \emph{exactly} as in the proof of Lemma \ref{leaiuto} - \textbf{Case (i)}, 
  we obtain the following estimate, which is the analog of
  \eqref{eq:tousePoincar}:
  \begin{equation} \label{eq:tousePoincarThmCaseI}
  \int_{\Omega_\lambda\setminus\mathcal{K}}\big(|\nabla u_i| 
   + |\nabla u_{i,\lambda}|\big)^{p_i -2} |\nabla w_{i,\lambda}^{+}|^2\,dx 
   \leq C\sum_{j = 1}^m\int_{\Omega_\lambda\setminus\mathcal{K}}|w_{j,\lambda}^+|^2\,d x
  \end{equation}
  (here, $C > 0$ is a suitable constant depending on $p_i, q_i, a_i, f_i, \lambda,\Omega,M$ (with  $M = M_{\mathbf{u}} := \max_{1\leq j\leq m}
  \big(\|u_j\|_{L^\infty(\overline{\Omega}_{\rho+\tau_0})}+
  \|\nabla u_j\|_{L^\infty(\overline{\Omega}_{\rho+\tau_0})}\big)
  < +\infty$)).
  But, in this case,  \eqref{eq:tousePoincarThmCaseI} coincides with   \eqref{eq:integralestimatesperPoincare}.
 \medskip
 
 \noindent \textbf{Case (ii).}
 In this case, for every $\varepsilon \in (0,\e_0)$ we consider
  the maps
 \begin{align*}
 & \varphi_{i,\varepsilon}^{(1)}(x):= 
  \begin{cases}
  	e^{-A_i(u_i(x))}w_{i,\lambda}^+(x)\,\psi_\varepsilon^{p_i}(x),
  	& \text{if $x\in\Omega_\lambda$}, \\
  	0, & \text{otherwise},
  \end{cases}, \\[0.1cm]
 & \varphi_{i,\varepsilon}^{(2)}(x):= 
\begin{cases}
e^{-A_i(u_{i,\lambda}(x))}w_{i,\lambda}^+(x)\,\psi_\varepsilon^{p_i}(x),
& \text{if $x\in\Omega_\lambda$}, \\
0, & \text{otherwise},
\end{cases}
\end{align*}
where $A_i(t) := \int_0^t a_i(s)\,d s$. Also in the present case, we have already
recognized in the proof of Lemma \ref{leaiuto}  that
$\varphi_{i,\varepsilon}^{(1)},\,\varphi_{i,\varepsilon}^{(2)}$ satisfy the following
properties:
\begin{itemize}
	\item[(i)] $\varphi^{(1)}_{i,\varepsilon}, \varphi^{(2)}_{i,\varepsilon} \in \mathrm{Lip}(\R^N);$
	
	\item[(ii)] $\operatorname{supp}(\varphi^{(1)}_{i,\varepsilon}) \subseteq \Omega_\lambda$, $\operatorname{supp}(\varphi^{(2)}_{i,\varepsilon}) \subseteq \Omega_\lambda$ and $\varphi^{(1)}_{i, \varepsilon} \equiv \varphi^{(2)}_{i,\varepsilon} \equiv 0$ near $\Gamma_\lambda$.
\end{itemize}
As a consequence, 
a standard density argument shows that it is possible to use
$\varphi^{(1)}_{i,\ep},\,\varphi^{(2)}_{i,\ep}$ as test functions in \eqref{eq:weaksoldef} and \eqref{eq.PDEulambda}, respectively. By 
subtracting the resulting identities,
and by taking into account \eqref{eq:wilambdaequivzeroK}, we then obtain
\begin{equation*}
	\begin{split}
		&
		\int_{\Omega_\lambda\setminus\mathcal{K}}
		\langle |\nabla u_i|^{p_i-2} \nabla u_i , \nabla \varphi_{i, \varepsilon}^{(1)} \rangle \, dx -\int_{\Omega_\lambda\setminus\mathcal{K}}  \langle |\nabla u_{i,\lambda}|^{p_i-2}\nabla u_{i,\lambda},\nabla
		\varphi_{i,\ep}^{(2)}\rangle\, dx \\
		&
		\qquad\qquad + \int_{\Omega_{\lambda}\setminus\mathcal{K}} a_i(u_i)|\nabla u_i|^{p_i} \varphi_{i, \varepsilon}^{(1)} \, dx- \int_{\Omega_\lambda\setminus\mathcal{K}} a_i(u_{i,\lambda})
		|\nabla u_{i,\lambda}|^{p_i} \varphi_{i,\ep}^{(2)}\, dx \\
		& \quad
		= \int_{\Omega_\lambda\setminus\mathcal{K}} f_i(\mathbf{u}) \varphi_{i,\ep}^{(1)}\, dx- \int_{\Omega_\lambda\setminus\mathcal{K}} f_i(\mathbf{u}_\lambda) \varphi_{i,\ep}^{(2)}\, dx.
	\end{split}
\end{equation*}
Starting from this identity,
  and proceeding \emph{exactly} as in the proof of Lemma \ref{leaiuto} - \textbf{Case (ii)}, 
  we obtain the following estimate, which is again the analogue of
  \eqref{eq:tousePoincar}:
  \begin{equation} \label{eq:tousePoincarThmCaseII}
  \int_{\Omega_\lambda\setminus\mathcal{K}}\big(|\nabla u_i| 
   + |\nabla u_{i,\lambda}|\big)^{p_i -2} |\nabla w_{i,\lambda}^{+}|^2\,dx
   \leq C\sum_{j = 1}^m\int_{\Omega_\lambda\setminus\mathcal{K}}|w_{j,\lambda}^+|^2\,d x
  \end{equation}
  (here, $C > 0$ is a suitable constant depending on $p_i, q_i, a_i, f_i, \lambda,\Omega,M$ (with  $M = M_{\mathbf{u}} := \max_{1\leq j\leq m}
  \big(\|u_j\|_{L^\infty(\overline{\Omega}_{\rho+\tau_0})}+
  \|\nabla u_j\|_{L^\infty(\overline{\Omega}_{\rho+\tau_0})}\big)
  < +\infty$)).
  But it \eqref{eq:tousePoincarThmCaseII} coincides with
  \eqref{eq:integralestimatesperPoincare} in this case.
 \end{proof}

\end{document}